\tikzset{->-/.style={decoration={markings,mark=at position #1 with {\arrow{>}}},postaction={decorate}}}
\newcommand{\showcommentsbox}{yes}
\newsavebox{\commentbox}
\definecolor{red}{rgb}{1,0,0} 
 \definecolor{darkgreen}{rgb}{0, .7, 0}
 \definecolor{purple}{rgb}{.7, 0, 1}
\tikzset{mynode/.style={draw,circle,fill=black,inner sep=2pt,outer sep=0.5pt}}
\newtheorem*{thm:main}{Theorem \ref{thm:main}}
\newtheorem*{corollaryMain}{Corollary \ref{cor:mainRAAGs}}
\newtheorem*{corollary1}{Corollary \ref{corollary1}}
\newtheorem*{corollary2}{Corollary \ref{corollary2}}
\newtheorem*{corollary3}{Corollary \ref{corollary3}}
\newtheorem*{corollary4}{Corollary \ref{corollary4}}
\newtheorem{theorem}{Theorem}[section]
\newtheorem*{theorem*}{Theorem}
\newtheorem*{lemma*}{Lemma}
\newtheorem{proposition}[theorem]{Proposition}
\newtheorem{lemma}[theorem]{Lemma}
\newtheorem{corollary}[theorem]{Corollary}
\theoremstyle{definition}
\newtheorem{definition}[theorem]{Definition}
\newtheorem{question}[theorem]{Question}
\theoremstyle{remark}
\newtheorem{remark}[theorem]{Remark}
\begin{document}

\title[fg normal subgroups of RAAGs]{On finitely generated normal subgroups of right-angled Artin groups and graph products of groups}

\author[M. Casals-Ruiz]{Montserrat Casals-Ruiz}
\address{Ikerbasque - Basque Foundation for Science and Matematika Saila,  UPV/EHU,  Sarriena s/n, 48940, Leioa - Bizkaia, Spain}
\email{montsecasals@gmail.com}

\author[J. Lopez de Gamiz Zearra]{Jone Lopez de Gamiz Zearra}
\address{Mathematics Institute, University of Warwick, Coventry, United Kingdom and Matematika Saila,  UPV/EHU,  Sarriena s/n, 48940, Leioa - Bizkaia, Spain}
\email{Jone.Lopez-De-Gamiz@warwick.ac.uk}

\begin{abstract}
A classical result of Schreier (see \cite{Schreier}) states that nontrivial finitely generated normal subgroups of free groups are of finite index, that is, free groups can only quotient to finite groups with finitely generated kernel. 

In this note we extend this result to the class of right-angled Artin groups (RAAGs). More precisely, we prove that the quotient of a RAAG by a finitely generated (full) normal subgroup is abelian-by-finite and finite-by-abelian.

As Schreier's result extends to nontrivial free products of groups, we further show that our result extends to graph products of groups.

As a corollary, we deduce, among others, that finitely generated normal subgroups of RAAGs have decidable word, conjugacy and membership problems and that they are hereditarily conjugacy separable.
\end{abstract}

\date{\today}
\keywords{right-angled Artin groups, normal subgroups, graph products}
\maketitle

\section{Introduction}

A classical result of Schreier (see \cite{Schreier}) states that nontrivial finitely generated normal subgroups of free groups are of finite index, that is, free groups can only quotient to finite groups with finitely generated kernel.

The main goal of this note is to extend Schreier's result for right-angled Artin groups (RAAGs for short). More precisely, we describe the quotient of a RAAG by a finitely generated normal subgroup and deduce finiteness properties and the decidability of algorithmic problems for finitely generated normal subgroups of RAAGs.

The class of RAAGs extends the class of finitely generated free groups, by allowing relations stating that some of the generators commute. Subgroups of RAAGs play an important role in geometric group theory by providing examples of groups with interesting finiteness properties and algorithmic behaviour.

Any RAAG $GX$ admits a decomposition $GX_1 \times \dots \times GX_k$ as a direct product where $GX_i$ is a directly indecomposable RAAG. A subgroup $H< G_1 \times \cdots \times G_n$ is called \emph{full} if $H$ intersects nontrivially each factor, i.e. $H \cap G_i \neq 1$ for all $i\in \{1, \dots, n\}$. Notice that if $H$ is full, in particular it is nontrivial and if $H$ intersects trivially one of the factors, say $H\cap G_1=1$, then $H$ is (isomorphic to) a subgroup of $G_2 \times \dots \times G_n$. 

We can next formulate the analogue of Scheirer's theorem for free groups in the context of RAAGs.

\begin{corollaryMain}
Let $G$ be a RAAG and let $N<G$ be a finitely generated full normal subgroup of $G$. Then $G/N$ is virtually abelian.
\end{corollaryMain}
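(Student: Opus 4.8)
The statement is the ``virtually abelian'' shadow of Theorem~\ref{thm:main}, which in addition records that $G/N$ is finite-by-abelian; since $G/N$ is finitely generated, being finite-by-abelian already forces it to be virtually abelian (a finitely generated finite-by-abelian group has a finite-index centre, via the usual commutator-pairing argument), so it is enough to establish the qualitative dichotomy underlying Theorem~\ref{thm:main}. I would argue by induction on $|V(\Gamma)|$ using the vertex-deletion splitting. If $\Gamma$ is complete then $G\cong\Z^{|V(\Gamma)|}$ and $G/N$ is finitely generated abelian; otherwise some directly indecomposable factor $GX_i=A(\Gamma_i)$ is non-cyclic, and I would fix a vertex $v\in\Gamma_i$ not adjacent to all of $\Gamma_i$. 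Then
\[
G=A(\mathrm{st}(v))\ast_{A(\mathrm{lk}(v))}A(\Gamma\setminus\{v\})
\]
is a \emph{proper} amalgam over the parabolic $A(\mathrm{lk}(v))$, a RAAG on fewer vertices, and the action of $G$ on its Bass--Serre tree $T$ is minimal. The plan is to analyse the restriction of this action to $N$.

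First I would rule out the elliptic case. If $N$ fixes a vertex of $T$, then normality and minimality force $N$ to fix all of $T$, so $N$ lies in the normal core $\bigcap_{g\in G}gA(\mathrm{lk}(v))g^{-1}$. Using the identity $uA(\Delta)u^{-1}\cap A(\Delta)=A(\Delta\cap\mathrm{lk}(u))$ for a vertex $u\notin\Delta$ and iterating, this core is contained in the parabolic on the set of vertices adjacent to everything outside it; as $\Gamma_i$ is not a join this set contains no vertex of $\Gamma_i$, so the core lies in $\prod_{j\neq i}GX_j$, which intersects $GX_i$ trivially --- contradicting that $N$ is full. Hence $N$ acts without a global fixed point. (The one real subtlety here is the possibility that $N$ fixes an end of $T$; then normality makes $G$ fix that end, which a proper amalgam cannot do, since the associated translation homomorphism $G\to\Z$ would have kernel inside an edge stabiliser; and in any remaining lineal situation one runs the next step with the invariant line in place of $T_N$.)

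So $N$ has a minimal invariant subtree $T_N$, which is $G$-invariant by normality and on which the finitely generated group $N$ acts cocompactly; hence $Y:=N\backslash T_N$ is a finite graph and $Q:=G/N$ acts on it. Letting $K/N$ be the kernel of $Q\to\Aut(Y)$, we get $[G:K]<\infty$ and, since $K$ fixes the $N$-orbit of any edge $e$ of $T_N$, the equality $K=N\,\mathrm{Stab}_K(e)$, whence $K/N\cong \mathrm{Stab}_K(e)/(N\cap G_e)$ --- a quotient of a subgroup of the edge stabiliser $G_e$, which is a conjugate of $A(\mathrm{lk}(v))$. If $\mathrm{lk}(v)$ is a clique, $G_e$ is free abelian, so $K/N$ is finitely generated abelian and $Q$ is virtually abelian. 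Otherwise $\mathrm{Stab}_K(e)=K\cap G_e$ is a finite-index subgroup of the smaller RAAG $A(\mathrm{lk}(v))$ and $N\cap G_e$ is normal in it, and I would want to close the induction by concluding that this quotient is virtually abelian.

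That last step is the hard part. When $A(\mathrm{lk}(v))$ is non-abelian, the subgroup $N\cap A(\mathrm{lk}(v))$ need not be finitely generated --- the Bieri--Stallings kernels inside $F_2\times F_2$ already illustrate this --- so one cannot simply invoke the inductive statement about \emph{finitely generated} normal subgroups. The resolution, which is exactly what the finer form of Theorem~\ref{thm:main} provides, is to carry through the induction the stronger ``finite-by-abelian'' conclusion (equivalently, finiteness of the relevant commutator subgroups), a property that is stable under the three operations that occur above: intersecting with a direct factor, restricting to a finite-index subgroup, and passing to the subquotients $\mathrm{Stab}_K(e)/(N\cap G_e)$. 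Once that stronger statement is in hand, the corollary follows by forgetting the finite part.
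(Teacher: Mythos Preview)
Your inductive scheme has a genuine gap at exactly the point you flag, and the proposed fix does not close it. Strengthening the inductive statement from ``virtually abelian'' to ``finite-by-abelian'' is a statement about the \emph{quotient} $G/N$, but the inductive hypothesis is still about \emph{finitely generated} normal subgroups of smaller RAAGs. To conclude anything about $K/N\cong \mathrm{Stab}_K(e)/(N\cap G_e)$ you must apply some hypothesis to the pair $(A(\mathrm{lk}(v)),\,N\cap A(\mathrm{lk}(v)))$, and $N\cap A(\mathrm{lk}(v))$ is, as you yourself note, typically not finitely generated. Saying that finite-by-abelian is ``stable under passing to the subquotient $\mathrm{Stab}_K(e)/(N\cap G_e)$'' is either the wrong direction (stability of the conclusion under taking subquotients is not what induction needs) or simply restates what has to be proved. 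So as written there is no induction to close; the sketch is a reduction to a statement that is not visibly easier than the original one.

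The paper's argument is not inductive at all and sidesteps this difficulty completely. After reducing to the case where $\Gamma$ has no central vertices, one runs your Bass--Serre step for \emph{every} vertex $v$ simultaneously: for each $v$ the finiteness of $N\backslash T_v$ gives $[G:N\,A(\mathrm{lk}(v))]<\infty$, so
\[
H \;=\; \bigcap_{v\in V(\Gamma)} N\,A(\mathrm{lk}(v))
\]
has finite index in $G$. The key observation is then a one-line commutator computation rather than a recursion: for $h\in H$ and any canonical generator $g_v$, write $h=n h_v$ with $n\in N$ and $h_v\in A(\mathrm{lk}(v))$; then $[h,g_v]=[n,g_v]^{h_v}[h_v,g_v]=[n,g_v]^{h_v}\in N$, since $h_v$ commutes with $g_v$. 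Thus $H/N$ is \emph{central} in $G/N$, giving virtually abelian immediately and finite-by-abelian by Schur. The point is that intersecting the subgroups $N\,A(\mathrm{lk}(v))$ over all $v$ manufactures centrality directly, so one never needs to know anything about $N\cap A(\mathrm{lk}(v))$ and the finite-generation obstacle simply does not arise.
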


If one does not require the subgroup $N$ to be full, we obtain the following result:

\begin{corollaryMain}
Let $G$ be a RAAG and let $N<G$ be a finitely generated normal subgroup of $G$. Then $G/N$ is virtually a RAAG.
\end{corollaryMain}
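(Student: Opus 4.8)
The plan is to reduce the statement to the case of a \emph{full} normal subgroup, which is settled by the previous corollary together with its sharpening Theorem~\ref{thm:main}.

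\emph{Reduction to nontrivial projections.} Write $G=GX_1\times\cdots\times GX_k$ for the decomposition into directly indecomposable RAAGs and let $p_i\colon G\to GX_i$ be the projections. Since $N$ is finitely generated, so is each $p_i(N)\lhd GX_i$. Reorder the factors so that $p_i(N)\neq 1$ precisely for $i\leq t$; if $t=0$ then $N=1$ and $G/N=G$ is a RAAG, so assume $t\geq 1$. As $p_i(N)=1$ for $i>t$ we get $N\leq G':=GX_1\times\cdots\times GX_t$, hence $G/N\cong (G'/N)\times GX_{t+1}\times\cdots\times GX_k$; the second factor is a RAAG and $\Z^m\times(\text{RAAG})$ is again a RAAG, so (using that a virtually abelian group has a finite index subgroup isomorphic to some $\Z^\ell$) it suffices to prove that $G'/N$ is virtually abelian. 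Then split off the centre: $G'=A\times R$, where $A$ is the product of those $GX_i$ ($i\leq t$) that are infinite cyclic and $R$ the product of the rest. Since a directly indecomposable RAAG on at least two vertices has trivial centre, $A=Z(G')\cong\Z^a$ and $Z(R)=1$, and $p_i(N)\neq 1$ for every factor of $A\times R$.

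\emph{Applying the full case.} Set $M:=p_R(N)\lhd R$, which is finitely generated. I claim $M$ is full in $R$: each factor $GX_i$ of $R$ has trivial centre, and a normal subgroup of $G'$ meets such a direct factor trivially exactly when it projects to it trivially (if $H\cap GX_i=1$ then $[H,GX_i]=1$, so $p_i(H)\leq Z(GX_i)=1$); since $p_i(M)=p_i(N)\neq 1$ we get $M\cap GX_i\neq 1$. Hence, by Theorem~\ref{thm:main}, $R/M$ is abelian-by-finite and finite-by-abelian, in particular virtually abelian. Moreover $N\leq A\times M$ (because $p_R(N)=M$ and $p_A(N)\leq A$), and $[R,M]\leq N$: for $n=(a_n,y_n)\in N$ with $y_n=p_R(n)\in M$ and $r\in R$ one has $[(1,r),n]=(1,[r,y_n])\in N$ by normality, and as $n$ varies $y_n$ runs over $M$, so these commutators — which generate $[R,M]$ — lie in $N$. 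Consequently $(A\times M)/N$ is a finitely generated abelian subgroup of $G'/N$, and it is central there, since $[(a,r),(a',m)]=(1,[r,m])\in[R,M]\leq N$ for $(a,r)\in G'$ and $(a',m)\in A\times M$. Thus $G'/N$ sits in a central extension
\[
1\longrightarrow (A\times M)/N\longrightarrow G'/N\longrightarrow R/M\longrightarrow 1
\]
with finitely generated abelian kernel and virtually abelian quotient.

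\emph{The main obstacle.} It remains to deduce that such a $G'/N$ is virtually abelian, and this is the only step that is not formal: a central extension of a virtually abelian group by a finitely generated abelian group need not be virtually abelian (the integral Heisenberg group is a central extension of $\Z^2$ by $\Z$), so one must exploit that $R/M$ is \emph{finite}-by-abelian and the precise way $N$ sits inside $A\times M$. Concretely, since $R/M$ is abelian-by-finite one may pick $B\lhd R$ with $M\leq B$, $B/M$ abelian and $[R:B]<\infty$; then $A\times B$ has finite index in $G'$ and contains $N$, so $(A\times B)/N$ has finite index in $G'/N$ and one reduces to showing it is virtually abelian. Now $(A\times B)/N$ is a central extension of the abelian group $B/M$ by the finitely generated abelian group $(A\times M)/N$, hence finitely generated nilpotent of class $\leq 2$, with commutator subgroup $[B,B]N/N$ and $[B,B]\leq M$; it suffices to show this commutator subgroup is finite, for then $(A\times B)/N$ is finite-by-abelian. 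I expect this to come out by tracking $[B,B]$ first into $M/[R,M]$ — legitimate because $[R,M]\leq N$, so $[B,B]N/N$ is a quotient of the image of $[B,B]$ in $M/[R,M]$ — and then modulo $N$: by Goursat's lemma applied to $N\leq A\times M$, the subgroup $N\cap R$ is the full preimage in $M$ of a subgroup of the finitely generated abelian group $M/[R,M]$, and, since $R/M$ is finite-by-abelian, $[R,R]\cap M$ has finite index in $[R,R]\supseteq[B,B]$; combining these, the image of $[B,B]$ in $(A\times M)/N$ is forced to be finite. An alternative, cleaner route is to prove directly that $N\cap R$ is finitely generated: it is full and normal in $R$, so the previous corollary gives that $R/(N\cap R)$ is virtually abelian, whence $A\times\bigl(R/(N\cap R)\bigr)$ is virtually abelian and $G'/N$, being a quotient of it, is as well. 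This is immediate when $[R:M]<\infty$ — for instance when $R$ is a free group or a nontrivial free product, by Schreier — and establishing it (equivalently, the finiteness above) in general is the technical heart of the proof; everything preceding it is bookkeeping.
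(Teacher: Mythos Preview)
Your reduction to $G'$ --- the product of the directly indecomposable factors on which $N$ projects nontrivially --- and the splitting $G' = A \times R$ are correct, as are the verifications that $M := p_R(N)$ is full in $R$ and that $[R,M] \leq N$. But the argument stops exactly where the content is: having set up the central extension
\[
1 \longrightarrow (A\times M)/N \longrightarrow G'/N \longrightarrow R/M \longrightarrow 1,
\]
you cannot deduce that $G'/N$ is virtually abelian, and you say so yourself. Neither sketch closes the gap. In the Goursat/$[B,B]$ route you never actually explain why the image of $[B,B]$ in $(A\times M)/N$ is finite; the facts you list ($[R,M]\leq N$, $M/(N\cap R)$ finitely generated abelian, $[R,R]M/M$ finite) do not combine to give this without further input. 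The ``cleaner route'' asks that $N\cap R$ be finitely generated, which is the same as asking that the kernel of the character $p_A|_N \colon N \twoheadrightarrow p_A(N)\cong\mathbb{Z}^c$ be finitely generated --- a BNS-type statement that is not automatic and that you do not prove. So as written this is a reduction, not a proof.

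The fix is to bypass the extension analysis and run the link argument from the proof of Theorem~\ref{thm:main} directly on $N \lhd G'$, rather than on $M \lhd R$. For a non-central vertex $v$ of $G'$, lying in a centerless factor $GX_i$, inspect the proof of Proposition~\ref{prop:min tree N}: the only way fullness is used is to derive $N \cap GX_i \neq 1$, and you have already established exactly this (from $p_i(N)\neq 1$ and $Z(GX_i)=1$). Hence the standard tree at $v$ has finite quotient by $N$, so $N\,G'(lk(v))$ has finite index in $G'$. For a central vertex $v$ one has $G_v \cong \mathbb{Z}$ and $G'(lk(v)) = G'(\Gamma'\setminus\{v\})$, and by your construction of $G'$ the projection $p_v(N)$ is nonzero, so $[G':N\,G'(lk(v))] = [\mathbb{Z}:p_v(N)] < \infty$. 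Intersecting over all vertices gives $H = \bigcap_v N\,G'(lk(v))$ of finite index in $G'$ with $N \leq H$, and the commutator identity $[nh_v, g_v] = [n,g_v]^{h_v}[h_v,g_v] = [n,g_v]^{h_v}\in N$ (valid for central $v$ as well, since then $G_v$ commutes with $G'(lk(v))$) shows $H/N$ is central in $G'/N$. This gives $G'/N$ virtually abelian directly; combined with your first reduction, the statement follows.
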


Notice that the theorem is optimal in the sense that there are RAAGs that fiber, i.e. there are RAAGs that quotient to free abelian groups with finitely generated kernel (see \cite{MeierMeinertVanWyk}).

This strong restriction on the class of quotients of RAAGs with finitely generated kernel does not pass to finitely presented subgroups of RAAGs. Indeed, Haglund and Wise showed that the Rips' Construction can be adjusted to virtually compact special groups and so the class of quotients of virtually compact special groups with finitely generated kernel coincides with the class of finitely presented groups.

\begin{theorem}[\cite{HaglundWise}, see also \cite{Arenas}]
For any finitely presented group $Q$,
there exists a virtually compact special group $H$ that quotients to $Q$ with finitely generated kernel. In other words, there exists a subgroup $S$ of a RAAG $G$ which quotients to a finite index subgroup of $Q$ with finitely generated kernel.
\end{theorem}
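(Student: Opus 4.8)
The plan is to combine the classical Rips construction with the theory of special cube complexes. First I would recall the Rips construction. Write $Q = \langle x_1, \dots, x_n \mid r_1, \dots, r_m \rangle$ and introduce two new generators $a_1, a_2$. For positive words $u_{ij}^{+}, u_{ij}^{-}, v_k$ in $a_1, a_2$ to be specified, set
\[
\Gamma = \Big\langle x_1, \dots, x_n, a_1, a_2 \;\Big|\; x_i a_j x_i^{-1} = u_{ij}^{+},\ x_i^{-1} a_j x_i = u_{ij}^{-}\ (1\le i\le n,\ j=1,2),\ r_k = v_k\ (1\le k\le m) \Big\rangle.
\]
Killing $a_1$ and $a_2$ yields an epimorphism $\pi\colon \Gamma \twoheadrightarrow Q$ with kernel $N$. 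Since the defining relations express every $x_i^{\pm1} a_j x_i^{\mp1}$ as a word in $a_1, a_2$, the subgroup $\langle a_1, a_2\rangle$ is normal in $\Gamma$, hence equals $N$; in particular $N$ is $2$-generated, so finitely generated. This produces a short exact sequence $1 \to N \to \Gamma \to Q \to 1$ with $N$ finitely generated.

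Next I would choose the words $u_{ij}^{\pm}, v_k$ long and generic enough (for instance as pairwise distinct long blocks of a single fixed positive word in $a_1,a_2$ with no short repeated pieces) so that, after the routine verification, the displayed presentation is $C'(1/6)$; then $\Gamma$ is word-hyperbolic. By Wise's cubulation of small cancellation groups, $\Gamma$ acts properly and cocompactly on a $\mathrm{CAT}(0)$ cube complex, and by Agol's virtual specialness theorem such a hyperbolic group is virtually compact special. Taking $H=\Gamma$ settles the first assertion: $H$ is virtually compact special and $\pi\colon H\twoheadrightarrow Q$ has finitely generated kernel $N$.

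For the second assertion, pass to a finite-index subgroup $\Gamma_0\le\Gamma$ that is the fundamental group of a compact special cube complex; by Haglund–Wise, $\Gamma_0$ embeds in a RAAG $G$. Put $S=\Gamma_0\le G$. The restriction $\pi|_{\Gamma_0}\colon \Gamma_0\to Q$ has image of index dividing $[\Gamma:\Gamma_0]$, hence of finite index in $Q$, and kernel $N\cap\Gamma_0$, which has finite index in $N$ and is therefore finitely generated. Thus $S$ is a subgroup of the RAAG $G$ quotienting onto a finite-index subgroup of $Q$ with finitely generated kernel.

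The Rips construction and the $C'(1/6)$ check are elementary, and the bookkeeping in the last step uses only that finite-index subgroups of finitely generated groups are finitely generated. The one genuinely hard input is the passage from the small cancellation group $\Gamma$ to a compact special (equivalently, RAAG-embeddable) finite-index subgroup; this rests on Wise's cubulation theorem together with Agol's theorem, or alternatively on the cubical Rips constructions of Haglund–Wise and Arenas, which build the relevant cube complex directly and bypass the small cancellation step altogether.
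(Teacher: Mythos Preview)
The paper does not prove this statement at all: it is quoted as a known result from \cite{HaglundWise} and \cite{Arenas}, placed in the introduction purely to contrast the rigidity of RAAGs themselves (Corollary~\ref{cor:mainRAAGs}) with the flexibility of their finitely presented subgroups. So there is no ``paper's own proof'' to compare against.

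Your sketch is nonetheless a correct outline of how the result is established. The classical Rips construction gives the short exact sequence $1\to N\to\Gamma\to Q\to 1$ with $N$ two-generated and $\Gamma$ a $C'(1/6)$ group; Wise's cubulation of finitely presented small-cancellation groups and Agol's theorem then make $\Gamma$ virtually compact special; and the Haglund--Wise embedding into a RAAG plus the elementary finite-index bookkeeping yield the second formulation. You also correctly flag that the references cited by the paper take a more direct route: Haglund--Wise (and Arenas more explicitly) arrange the Rips-type presentation so that the resulting group is \emph{itself} the fundamental group of a compact nonpositively curved cube complex that one can verify is special, thereby avoiding the appeal to Agol's theorem. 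Both routes are valid; the direct cubical construction is more self-contained, while yours leans on deeper black boxes but is conceptually cleaner to state.
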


Corollary \ref{cor:mainRAAGs} states that the quotient of a RAAG by a finitely generated full normal subgroup is abelian-by-finite. In fact, we deduce that the quotient is finite-by-abelian by showing that a finite index abelian subgroup of the quotient is actually central. As a consequence, we get that finitely generated full normal subgroups of RAAGs are finite index subgroups of kernels of homomorphisms to abelian groups. In the literature, these subgroups are sometimes called of \emph{Stallings-Bieri type}.

\begin{corollaryMain}
Let $G$ be a RAAG and let $N<G$ be a finitely generated full normal subgroup of $G$. Then $N$ is commensurable with a kernel of a character.

More precisely, there is $[\chi] \in \Sigma^1(G)$ such that $N< \ker \chi$ and $N$ has finite index in $\ker \chi$.
\end{corollaryMain}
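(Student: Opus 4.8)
The plan is to deduce the statement from Corollary~\ref{cor:mainRAAGs} together with the classical criterion of Bieri--Neumann--Strebel characterising finite generation of a normal subgroup with abelian quotient via the invariant $\Sigma^1$. Write $Q=G/N$ and let $\pi\colon G\to Q$ be the quotient map. Since $N$ is a finitely generated full normal subgroup, Corollary~\ref{cor:mainRAAGs} tells us that $Q$ is abelian-by-finite, and moreover --- this being the refinement established in its proof --- a finite index abelian subgroup of $Q$ is central; hence $Q$ is centre-by-finite, so by Schur's theorem $Q':=[Q,Q]$ is finite. As $G$ is finitely generated, so is $Q$, and therefore $Q/Q'$ is a finitely generated abelian group; write $Q/Q'\cong\Z^m\oplus T$ with $T$ finite.

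Next I would pull this structure back to $G$. Put $M=\pi^{-1}(Q')$ and let $\chi\colon G\twoheadrightarrow\Z^m$ be the composite $G\xrightarrow{\pi}Q\to Q/Q'\to(Q/Q')/T\cong\Z^m$, with kernel $K=\ker\chi$. Then $N\le M\le K\le G$, each of them normal in $G$, and the successive indices $[M:N]=|Q'|$ and $[K:M]=|T|$ are finite; hence $[K:N]=|Q'|\,|T|<\infty$. Thus $N$ is contained in $\ker\chi$ with finite index, so $N$ is commensurable with (indeed of finite index in) the kernel of the character $\chi$; when $m=0$ this simply records that $N$ has finite index in $G$. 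In particular $K$ contains the finitely generated group $N$ with finite index, hence $K$ is itself finitely generated.

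Finally, $K$ is a finitely generated normal subgroup of $G$ with $G/K\cong\Z^m$ abelian, so the Bieri--Neumann--Strebel criterion applies and yields $S(G,K)\subseteq\Sigma^1(G)$, where $S(G,K)=\{[\xi]\in S(G):K\le\ker\xi\}$ is the subsphere of the character sphere $S(G)=(\mathrm{Hom}(G,\R)\setminus\{0\})/\R_{>0}$ cut out by $K$; this is exactly the claimed $\Sigma^1$-membership for $\chi$. I expect the real work to lie entirely upstream in Corollary~\ref{cor:mainRAAGs}: once $Q$ is known to be abelian-by-finite, the only extra input is the strengthening to finite-by-abelian (centrality of the finite index abelian subgroup, then Schur's theorem), after which the two pull-back steps above are bookkeeping and the last step is just the BNS criterion. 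One point worth stating carefully is that $m$ need not equal $1$ --- for example the right-angled Artin group on a $5$-cycle surjects onto $\Z^2$ with finitely generated kernel --- so here $\chi$ is a homomorphism onto a free abelian group $\Z^m$, and ``$[\chi]\in\Sigma^1(G)$'' abbreviates the assertion that the subsphere $S(G,\ker\chi)$ lies in $\Sigma^1(G)$.
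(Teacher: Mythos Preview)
Your argument is correct and follows the route the paper intends: this corollary is stated in the introduction as a reformulation of Corollary~\ref{cor:mainRAAGs} and is not given a separate proof in the body, but the intended deduction is exactly the one you carry out---use the finite-by-abelian conclusion to sandwich $N$ with finite index inside a co-abelian normal subgroup $K$, then invoke the Bieri--Neumann--Strebel criterion for the finitely generated $K$. Your final paragraph usefully clarifies a point the paper leaves implicit, namely that when $m>1$ the assertion ``$[\chi]\in\Sigma^1(G)$'' should be read as $S(G,\ker\chi)\subseteq\Sigma^1(G)$ (equivalently, one may realise $\chi$ as a single real-valued character via an embedding $\Z^m\hookrightarrow\R$ with rationally independent image).
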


In \cite{BridsonMiller}, Bridson and Miller proved an analogous result for normal subdirect products of direct products of groups. Recall that a subgroup $H< A_1 \times \cdots \times A_n$ is a  \emph{subdirect product} if each projection homomorphism from $A_1 \times \cdots \times A_n$ to $A_i$ restricts to an epimorphism from $H$ to $A_i$. They showed that if $G < A_1 \times \cdots \times A_n$ is a normal subdirect product, then $G$ is the kernel of a homomorphism from $A_1 \times \cdots \times A_n$ to an abelian group (see \cite[Proposition 1.2]{BridsonMiller}). Note, however, that being a subdirect product is quite restrictive: if $N_i$ is a proper finitely generated normal subgroup of $A_i$, then $N_1\times \cdots \times N_n$ is a finitely generated normal subgroup of $A_1\times \cdots A_n$ but it is not a subdirect product. It is clear that the condition of being a subdirect product is essential in that generality while we do not need to require it for RAAGs.

Direct products of finitely many free groups are themselves RAAGs and their subgroups have been studied extensively due to their rich finiteness properties and their relevance in the study of residually free groups. Baumslag and Roseblade studied subgroups of the direct product of two free groups in \cite{BaumslagRoseblade} and they showed that there is a dichotomy between finitely generated and finitely presented subgroups. More concretely, they proved that finitely presented subgroups of the direct product of two free groups are virtually the direct product of two free groups. In contrast, they showed that finitely generated subgroups of the direct product of two free groups have a complex structure and there are continuously many of them up to isomorphism.

One of the corollaries that we obtain from Corollary \ref{cor:mainRAAGs} is that when restricted to finitely generated normal subgroups, the set is in fact countable:

\begin{corollary1}
The set of finitely generated normal subgroups of right-angled Artin groups is countable.
\end{corollary1}

As we mentioned before, subgroups of RAAGs are also interesting from the algorithmic point of view. Mihailova proved in \cite{Mihailova} that there are finitely generated subgroups of the direct product of two free groups with undecidable membership problem and Miller showed in \cite{Miller} that they also have undecidable conjugacy problem. From the work of Baumslag and Roseblade, however, it follows that these algorithmic problems are decidable for finitely presented subgroups of the direct product of two free groups. Bridson, Howie, Miller and Short conducted a programme that culminated in the papers \cite{BridsonHowieMillerShort} and \cite{BridsonHowieMillerShort2} to generalise the result of Baumslag and Roseblade to finitely presented subgroups of direct products of finitely many free groups (and, more generally, limit groups). Among the several powerful results that they obtained, a remarkable one is that the word, conjugacy and membership problems are decidable for finitely presented subgroups of direct products of finitely many free/limit groups (although the decidability of the isomorphism problem is still open). One may wonder if this nice algorithmic behaviour can be extended from the class of finitely presented subgroups of direct products of free groups to the class of finitely presented subgroups of RAAGs. In \cite{Bridson} Bridson gave a negative answer to this question by showing that there is a RAAG $A$ and a finitely presented subgroup $S$ of $A\times A$ with undecidable conjugacy and membership problems. This shows that finite presentability is not enough to ensure a good algorithmic behaviour and raises the question of whether there is an appropriate property for a subgroup of a RAAG to have decidable algorithmic problems.

Combining Corollary \ref{cor:mainRAAGs} with results from \cite{Bridson2}, we prove the following:

\begin{corollary2} 
The word, conjugacy and membership problems are decidable for finitely generated normal subgroups of RAAGs.
\end{corollary2}

The isomorphism problem for the class of finitely generated normal subgroups of RAAGs is still open and we record it in the following:

\begin{question}
Is the isomorphism problem decidable for finitely generated normal subgroups of RAAGs?
\end{question}

For finitely presented groups, the decidability of the word and conjugacy problems are a consequence of their residual properties, namely, residual finiteness and conjugacy separability correspondingly (see \cite{Malcev}). Recall that a group $G$ is \emph{conjugacy separable} if for any two non-conjugate elements $x,y \in G$ there is a homomorphism $\varphi$ from $G$ to a finite group $Q$ such that $\varphi(x)$ and $\varphi(y)$ are not conjugate in $Q$. While residual finiteness passes to subgroups, conjugacy separability does not, even to finite index subgroups. This motivates the following definition. A group is said to be \emph{hereditarily conjugacy separable} if each of its finite index subgroups is conjugacy separable. Minasyan proves in \cite{Minasyan} that RAAGs are hereditarily conjugacy separable. However, together with Martino, they showed that this is not true for finitely presented subgroups of RAAGs, that is, there is a finitely presented subgroup of a RAAG which is conjugacy separable but not hereditarily conjugacy separable (see \cite[Theorem 1.1]{MartinoMinasyan}).

As before, finite presentability is not enough to inherit hereditary conjugacy separability from the RAAG but results from \cite{Minasyan} together with Corollary \ref{cor:mainRAAGs} lead to the fact that normal subgroups of RAAGs do inherit this property:

\begin{corollary3}
Any finitely generated full normal subgroup of a RAAG is hereditarily conjugacy separable, so in particular, conjugacy separable.
\end{corollary3}

Let $G$ be a group and let $Aut(G)$ be the group of automorphisms of $G$. We say that $\psi \in Aut(G)$ is \emph{pointwise inner} if for each $g\in G$, $\psi(g)$ is conjugate to $g$. The set of pointwise inner automorphisms, $Aut_{pi}(G)$, forms a normal subgroup of $Aut(G)$, and clearly the subgroup of inner automorphisms, $Inn(G)$, is contained in $Aut_{pi}(G)$. Grossman showed in \cite{Grossman} that if $G$ is a finitely generated conjugacy separable group with $Aut_{pi}(G)= Inn(G)$, then $Out(G)$ is residually finite.

Antolin, Minasyan and Sisto showed in \cite[Theorem 1.6]{AntolinMinasyanSisto} that if $H$ is a non-abelian finitely generated subgroup of a RAAG, then $Aut_{pi}(H)=Inn(H)$.

From Corollary \ref{corollary3} we have that finitely generated full normal subgroup of RAAGs are conjugacy separable. 

Combining these results together with Grossman's theorem, we deduce the following:

\begin{corollary}
The outer automorphism $Out(N)$ of a full finitely generated normal subgroup of a RAAG is residually finite.
\end{corollary}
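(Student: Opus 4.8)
The plan is to apply Grossman's criterion from \cite{Grossman}: if a group $G$ is finitely generated, conjugacy separable, and satisfies $Aut_{pi}(G) = Inn(G)$, then $Out(G)$ is residually finite. Thus, letting $N$ be a full finitely generated normal subgroup of a RAAG, it suffices to verify these three hypotheses for $N$ and then invoke Grossman's theorem.

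The first hypothesis is part of the standing assumptions on $N$. The second holds by Corollary \ref{corollary3}, which asserts that $N$ is hereditarily conjugacy separable, hence in particular conjugacy separable. For the third, we distinguish two cases. If $N$ is non-abelian, then since $N$ is a finitely generated subgroup of a RAAG, \cite[Theorem 1.6]{AntolinMinasyanSisto} gives $Aut_{pi}(N) = Inn(N)$ directly. If $N$ is abelian, then $Inn(N)$ is trivial, while any $\psi \in Aut_{pi}(N)$ must fix every element of $N$ (conjugation being trivial in an abelian group), so $\psi = \mathrm{id}$; hence again $Aut_{pi}(N) = Inn(N)$. Granting the three hypotheses, Grossman's theorem yields that $Out(N)$ is residually finite, which is the assertion.

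The only subtlety is the split into cases in the last step, forced by the fact that the cited result of Antolin--Minasyan--Sisto is stated only for non-abelian subgroups. This causes no real difficulty: in the abelian case the equality $Aut_{pi}(N) = Inn(N)$ is immediate, and one may moreover observe that a full finitely generated normal subgroup of a RAAG can be abelian only when it is free abelian of finite rank (a nontrivial normal subgroup of a non-abelian directly indecomposable RAAG factor cannot be abelian), in which case the conclusion also follows directly from residual finiteness of $\GL_k(\Z)$. No other obstacles arise, since all the substantive work has already been carried out in Corollary \ref{corollary3} and in the cited literature.
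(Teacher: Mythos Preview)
Your proof is correct and follows exactly the approach sketched in the paper: apply Grossman's criterion using conjugacy separability from Corollary~\ref{corollary3} and the equality $Aut_{pi}(N)=Inn(N)$ from \cite[Theorem~1.6]{AntolinMinasyanSisto}. Your explicit treatment of the abelian case is a nice bit of extra care that the paper's sketch omits.
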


Bestvina and Brady, by means of Morse theory, studied the finiteness properties of kernels of homomorphisms from RAAGs to $\mathbb{Z}$ that send all the standard generators to $1$ and characterised the finiteness properties of those kernels in terms of the topology of the flag complex associated to the RAAG (see \cite{BestvinaBrady}). After that, Meier, Meinert and VanWyk computed in \cite{MeierMeinertVanWyk} the Bieri-Neumann-Strebel-Renz invariants for RAAGs and as a consequence, they determined the finiteness properties of co-abelian normal subgroups of RAAGs. We translate those results to finitely generated normal subgroups of RAAGs:

\begin{corollary4}
Let $G$ be a RAAG given by its standard presentation and let $N< G$ be a finitely generated full normal subgroup of $G$. Then there is an algorithm that given a finite generating set of $N$ as words in the generators of $G$ and $n\in \mathbb N$, decides whether or not $N$ is of type $F_n$ and $FP_n$.
\end{corollary4}

In \cite{DicksLeary}, Dicks and Leary describe explicit presentations for the kernels of the above homomorphisms. We wonder whether that result can be extended to all finitely presented normal subgroups of RAAGs in an algorithmic way:

\begin{question}
Is there an algorithm that, given a finite generating set of a finitely presented full normal subgroup of a RAAG, computes its presentation?
\end{question}

Schreier's result was generalised by Baumslag to nontrivial free products of groups (see \cite{Baumslag}), namely, if $G=G_1 \ast \cdots \ast G_k$ with $k \geq 2$ and $G_i \neq 1$ for $i\in \{1,\dots,k\}$ and if $N<G$ is a nontrivial finitely generated normal subgroup of $G$, then $N$ has finite index in $G$. 

In a similar way, we extend our main result, Corollary \ref{cor:mainRAAGs}, to graph products of groups as follows.

Let $\Gamma$ be a finite simplicial graph and let $\mathcal{G}= \mathcal G(\Gamma, \{G_v\}_{v\in V(\Gamma)})$ be the graph product with underlying graph $\Gamma$ and nontrivial vertex groups $G_v$. We say that $v\in V(\Gamma)$ is \emph{central} if $v$ is adjacent to every vertex of $\Gamma$. If $v$ is central, we say that $G_v$ is a \emph{central vertex group}. Every finite graph $\Gamma$ can be decomposed as a finite join of subgraphs, say $\Gamma_1, \dots, \Gamma_k$, such that $\Gamma_i$ is indecomposable as a join of two subgraphs (see Definition \ref{definition1}). This decomposition leads to a decomposition of $G$ as a direct product $G_1\times \cdots \times G_k$, where $G_i$ is the graph product $\mathcal{G}(\Gamma_i, \{G_v\}_{v\in V(\Gamma_i)})$.

\begin{thm:main}
Let $G$ be a graph product of groups and let $N<G$ be a finitely generated full normal subgroup of $G$. If each central vertex group satisfies that all its proper quotients are abelian-by-finite and finite-by-abelian, then $G \slash N$ is abelian-by-finite and finite-by-abelian.
\end{thm:main}



\begin{corollary}
Let $G$ be a graph product of groups without central vertex groups and let $N<G$ be a finitely generated full normal subgroup of $G$. Then $G \slash N$ is abelian-by-finite and finite-by-abelian.
\end{corollary}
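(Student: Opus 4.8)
The plan is to obtain this as an immediate special case of Theorem \ref{thm:main}. That theorem shows that $G/N$ is abelian-by-finite and finite-by-abelian under one extra hypothesis: that every central vertex group of $\mathcal G$ has the property that all of its proper quotients are abelian-by-finite and finite-by-abelian. So the only point to verify is that, in the situation of the corollary, this hypothesis is automatically in force.

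First I would unwind the definitions. A vertex group $G_v$ is central precisely when the vertex $v$ is adjacent to every other vertex of $\Gamma$. Since by hypothesis $G$ has no central vertex groups, no such $v$ exists, and hence the set of central vertex groups of $\mathcal G$ is empty. Consequently the universally quantified condition ``each central vertex group satisfies that all its proper quotients are abelian-by-finite and finite-by-abelian'' is satisfied vacuously, and Theorem \ref{thm:main} applies verbatim, yielding that $G/N$ is abelian-by-finite and finite-by-abelian.

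It is worth recording why the absence of central vertices is exactly what lets one drop the hypothesis. In the join decomposition $\Gamma = \Gamma_1 * \cdots * \Gamma_k$ into join-indecomposable subgraphs, the factors $\Gamma_i$ that consist of a single vertex are precisely the central vertices of $\Gamma$ (a single vertex cannot be written as a nontrivial join, and a vertex $v$ with $\Gamma = \{v\} * (\Gamma\smallsetminus v)$ is central), and the corresponding direct factors of $G = G_1 \times \cdots \times G_k$ are the central vertex groups. These are the factors whose contribution to a quotient of $G$ is not constrained by any graph-product structure, which is exactly why in the general statement one has to assume in advance that their quotients already have the desired form. When $\Gamma$ has no central vertex, none of the $\Gamma_i$ is a single vertex, every $G_i$ carries a genuine (directly indecomposable) graph-product structure, and no assumption beyond fullness and finite generation of $N$ is needed. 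There is no real obstacle here: once Theorem \ref{thm:main} is established, the corollary is a one-line deduction, the only content being the vacuous-hypothesis observation above.
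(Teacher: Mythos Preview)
Your proposal is correct and matches the paper's intended argument: the corollary is stated in the paper immediately after Theorem \ref{thm:main} with no proof, precisely because it follows at once by observing that the hypothesis on central vertex groups is vacuous when there are none. Your first two paragraphs already constitute the complete proof; the third paragraph is correct commentary but not needed for the deduction.
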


Notice that the result for RAAGs (Corollary \ref{cor:mainRAAGs}) follows from Theorem \ref{thm:main} since RAAGs are graph products with infinite cyclic vertex groups and so all central vertex groups have only abelian quotients. Right-angled Coxeter groups (RACGs) are also graph products with cyclic groups of order two as vertex groups. In particular, proper quotients of vertex groups are finite and so we obtain:

\begin{corollary}\label{corollary5}
Let $G$ be a RACG and let $N<G$ be a finitely generated full normal subgroup of $G$. Then $G \slash N$ is abelian-by-finite and finite-by-abelian.
\end{corollary}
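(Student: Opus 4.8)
The plan is to obtain this as an immediate special case of Theorem \ref{thm:main}. First I would recall that every right-angled Coxeter group $G$ is, by definition, the graph product $\mathcal{G}(\Gamma, \{G_v\}_{v\in V(\Gamma)})$ whose underlying graph $\Gamma$ is the defining graph of the RACG and all of whose vertex groups $G_v$ are isomorphic to the cyclic group $\Z/2\Z$. In particular every vertex group is finite and nontrivial, so $G$ fits the framework of Theorem \ref{thm:main}, regardless of how $\Gamma$ decomposes as a join and regardless of which vertices turn out to be central.

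Next I would verify the hypothesis of Theorem \ref{thm:main} concerning central vertex groups. If $v\in V(\Gamma)$ is central, then $G_v\iso \Z/2\Z$, and the only proper quotient of $\Z/2\Z$ is the trivial group. The trivial group is abelian (hence finite-by-abelian) and finite (hence abelian-by-finite), so every proper quotient of every central vertex group of $G$ is both abelian-by-finite and finite-by-abelian, exactly the condition required by Theorem \ref{thm:main}.

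Finally, applying Theorem \ref{thm:main} to $G$ together with the finitely generated full normal subgroup $N<G$ gives directly that $G/N$ is abelian-by-finite and finite-by-abelian, which is the statement of Corollary \ref{corollary5}. Note that one cannot simply invoke the corollary for graph products without central vertex groups, since a RACG may well have central vertices (those adjacent to all others); what makes the argument go through nonetheless is the finiteness of the vertex groups $\Z/2\Z$, which trivially satisfies the hypothesis on central vertex groups. There is no real obstacle here beyond Theorem \ref{thm:main} itself; the verification above is immediate.
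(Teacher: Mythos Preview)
Your proposal is correct and follows exactly the paper's own reasoning: the paper notes just before Corollary~\ref{corollary5} that RACGs are graph products with vertex groups $\Z/2\Z$, whose only proper quotient is trivial (hence abelian-by-finite and finite-by-abelian), so Theorem~\ref{thm:main} applies directly. There is nothing to add.
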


The classical result of Schreier has been established in more general settings. We have mentioned that Baumslag extended it to nontrivial free products of groups, and Karrass and Solitar extended it to finitely generated subgroups of free groups that contain a nontrivial normal subgroup:

\begin{theorem}\cite[Theorem 1]{KarrassSolitar}
Let $F$ be a free group and let $H$ be a finitely generated subgroup containing a nontrivial normal subgroup of $F$. Then $H$ must be of finite index in $F$.
\end{theorem}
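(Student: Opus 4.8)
The plan is to deduce the theorem from Marshall Hall's theorem on free factors, so that the whole argument reduces to one elementary observation about free products.

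\emph{A free‑product lemma.} First I would record the following: if $M=A\ast B$ is a free product, $N\trianglelefteq M$ is nontrivial, and $N\leq A$, then $B=1$. Indeed, if $1\neq b\in B$ and $1\neq n\in N\subseteq A$, then $bnb^{-1}\in N\subseteq A$ by normality, yet $bnb^{-1}$ is already in reduced form, with syllable sequence $b\in B$, $n\in A$, $b^{-1}\in B$ of length $3$; by the normal form theorem for free products it cannot lie in $A$ (whose elements have syllable length $\leq 1$), a contradiction. I would use this lemma twice.

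\emph{Reduction to $F$ finitely generated.} Choose a finite generating set of $H$ and let $F_0\leq F$ be the free factor of $F$ generated by the finitely many basis letters occurring in these words, so that $H\leq F_0$ and $F=F_0\ast F_1$ with $F_1$ free. Since $N\trianglelefteq F$ and $N\leq F_0$, the lemma (applied to $M=F$, $A=F_0$, $B=F_1$) gives $F_1=1$, i.e. $F=F_0$ is finitely generated. The cases $\operatorname{rank}(F)\leq 1$ being immediate, we may assume $F$ is free of finite rank $d\geq 2$.

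\emph{Applying Hall's theorem.} By Marshall Hall's theorem the finitely generated subgroup $H$ of $F$ is a free factor of some finite‑index subgroup $L\leq F$; write $L=H\ast H'$. (If one wants this self‑contained: the core graph $\Delta$ of $H$ is a finite graph immersing in the rose $R_d$; completing $\Delta$ to a finite‑sheeted cover $\Delta'\to R_d$ realizes $L=\pi_1(\Delta')$, and choosing a maximal tree of $\Delta'$ extending one of $\Delta$ exhibits a free basis of $\pi_1(\Delta)=H$ as a subset of a free basis of $L$.) Now $N$ is normal in $L$ as well, it is nontrivial, and $N\leq H$, so the lemma applies with $(M,A,B)=(L,H,H')$ and yields $H'=1$, that is $L=H$. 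Hence $[F:H]=[F:L]<\infty$, as required.

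\emph{Main obstacle.} The only substantial input is Hall's theorem — equivalently, the fact that a finite graph immersing in a finite rose can be completed to a finite‑sheeted cover — so that is where I would expect to do real work if I insisted on a self‑contained proof; everything else is the formal free‑product computation above. A purely graph‑theoretic alternative (showing that $[F:H]=\infty$ forces the core graph of $H$ to have a vertex of deficient valence, and then playing the deck action of $F/N$ on the cover of $R_d$ associated to $N$ against this) is possible but markedly less transparent than routing through Hall's theorem.
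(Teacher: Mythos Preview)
The paper does not prove this statement at all: it is quoted verbatim from \cite{KarrassSolitar} as motivational background in the introduction, with no accompanying argument. So there is nothing in the paper to compare your proof against.

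That said, your proof is correct. The free-product lemma is a clean observation, the reduction to finitely generated $F$ is sound (you correctly use that the generators of $H$ involve only finitely many letters of a fixed basis of $F$), and the appeal to Marshall Hall's theorem finishes things off exactly as you describe. One minor wording quibble: when you say ``$N$ is normal in $L$ as well,'' you might spell out that this is because $L\leq F$ and normality in $F$ restricts to normality in any overgroup of $N$ inside $F$; it is obvious, but since you are otherwise careful it is worth a clause. The graph-theoretic sketch of Hall's theorem via core graphs and completion to a finite cover is standard and adequate for this purpose.
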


The main result of this note states that finitely generated full normal subgroups of RAAGs are co-(virtually abelian). Thus, having the previous theorem of Karrass and Solitar in mind, one could wonder whether a similar statement holds for RAAGs, namely, one could ask if $G$ is a RAAG and $H$ is a finitely generated subgroup of $G$ containing a nontrivial normal subgroup $N$ of $G$, then $H$ virtually contains the commutator subgroup $[G,G]$. However, Bridson, Howie, Miller and Short gave examples of finitely presented subgroups of direct products of finitely many free groups containing some term of the lower central series of the group but not the commutator subgroup (see \cite[Theorem H]{BridsonHowieMillerShort}). This shows that the most naive generalisation does not hold. However, we would like to propose the following:

\begin{question}
Let $G$ be a RAAG, let $H$ be a finitely generated subgroup of $G$ and $N$ a full normal (not necessarily finitely generated) subgroup of $G$ such that $N<H$. Is it true that there is a subnormal series
\[ H_0 < H_1 < \cdots < H_{n-1} < H_n =G\]
such that $H_{i+1}\slash H_i$ is either finite or cyclic and $H_0$ is of finite index in $H$?
\end{question}

There are several conditions that assure that a group belongs to the class of groups for which nontrivial finitely generated normal subgroups are either finite or of finite index. For instance, this is the case for groups of deficiency greater than one (see \cite[Remark B3.25]{Strebel}). Gaboriau generalised this result in \cite[Theorem 6.8]{Gaboriau} by showing that it suffices to have positive first $L_2$-Betti number. Recently, Kielak has proved that a finitely generated RFRS group $G$ is virtually fibred, in the sense that it admits a virtual surjection to $\mathbb{Z}$ with a finitely generated kernel, if and only if the first $L_2$-Betti number of $G$ vanishes (see \cite{Kielak}).

We take the converse point of view in this characterisations and ask:

\begin{question}

Can one characterise the class of groups such that the quotient of the group by any nontrivial finitely generated normal subgroup is virtually abelian?
\end{question}




\section{Graph products}

We begin this section recalling some basics on graphs.

\begin{definition}\label{definition1}
In this note all graphs $\Gamma=(V,E)$ are assumed to be \emph{simplicial}, that is, $(v,v)\notin E$ and for all $v,w\in V$, there is at most one edge $(v,w)\in E$. A graph $\Delta=(V^\prime,E^\prime)$ is an \emph{induced subgraph} of $\Gamma$ if $V^\prime < V$ and for all $v,w\in V^\prime$, $(v,w)\in E$ if and only if $(v,w)\in E^\prime$.

Given two graphs $\Gamma_1=(V_1, E_1)$ and $\Gamma_2=(V_2,E_2)$, the \emph{join} of $\Gamma_1$ and $\Gamma_2$ is the graph with vertex set $V=V_1 \cup V_2$ and edges $E=E_1 \cup E_2 \cup \{(v,w) \mid v\in V_1, w\in V_2\}$. In other words, $\Gamma_1$ and $\Gamma_2$ are induced subgraphs of $\Gamma$ and each vertex in $\Gamma_1$ is connected to each vertex in $\Gamma_2$.
\end{definition}

We say that $v\in \Gamma$ is \emph{central} if $\Gamma$ is the join of the induced subgraphs $\{v\}$ and $\Gamma\setminus \{v\}$. We denote by $Z(\Gamma)$ the set of central vertices of $\Gamma$.  

The \emph{link} $lk(v)$ of a vertex $v$ in $\Gamma$ is the induced subgraph of $\Gamma$ with vertex set $\{ w\in V \mid (v,w)\in E\}$. The \emph{star} $st(v)$ of a vertex $v$ in $\Gamma$ is the induced subgraph of $\Gamma$ defined as the join of $\{v\}$ and $lk(v)$.

Let $\Gamma=(V,E)$ be a finite simplicial graph. Let $\mathcal G=\mathcal G(\Gamma, \{G_v\}_{v\in V})$ be a graph product of groups. For a subgraph $\Delta<\Gamma$, we denote by $\mathcal G(\Delta)$ the graph product induced by the graph $\Delta$ as a subgroup of $\mathcal G$. 

For each $v\in V$, the group $\mathcal G$ splits as follows:
\begin{equation}\label{eq:splitting}
\mathcal G= \mathcal G( \Gamma \setminus \{v\}) \ast_{\mathcal G(lk(v))} \mathcal G(st(v)),
\end{equation}
where $\Gamma \setminus \{v\}$ is the full subgraph of $\Gamma$ with vertex set $V \setminus \{v\}$ and $lk(v)$ and $st(v)$ are the link and the star of $v$ in $\Gamma$, respectively. Notice that this splitting is proper if and only if $v \notin Z(\Gamma)$. In this case, the associated Bass-Serre tree $T$ is reduced and $G$ acts minimally on $T$. We call $T$ the \emph{standard tree of} $\mathcal G$ (\emph{associated to $v$}).

\begin{lemma}\label{lem:WPD graph products}\cite[Corollary 6.19]{MinasyanOsin}
Suppose that $\Gamma$ has more than one vertex and that the graph product $\mathcal G$ is directly indecomposable. Let $T$ be a standard tree for $\mathcal G$. Then $\mathcal G$ contains a WPD element for the given action of $\mathcal G$ on $T$.
\end{lemma}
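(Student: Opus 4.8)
This is \cite[Corollary 6.19]{MinasyanOsin}; here is a sketch of how one would prove it. Recall first the standard criterion for WPD elements of actions on simplicial trees: a loxodromic isometry $g$ is WPD for the action of $\mathcal G$ on a tree $T$ if and only if there are an edge $e$ on the axis of $g$ and an integer $N\geq 1$ such that $\mathcal G_e\cap g^N\mathcal G_e g^{-N}$ is finite; since an element fixing two edges of a tree fixes the geodesic between them, this intersection is exactly the pointwise stabiliser of a subsegment of the axis of $g$, i.e. the intersection of the stabilisers of all edges lying on that subsegment. By hypothesis $T$ is the standard tree associated to a non-central vertex $v$, so by \eqref{eq:splitting} we have $\mathcal G=\mathcal G(\Gamma\setminus\{v\})\ast_{C}\mathcal G(st(v))$ with $C=\mathcal G(lk(v))$ and $\mathcal G(st(v))=G_v\times C$, and every edge stabiliser of $T$ is a conjugate of $C$. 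Thus it suffices to produce a loxodromic $g\in\mathcal G$ such that the edge with stabiliser $C$ lies on its axis and $C\cap g^N C g^{-N}$ is finite for some $N$.

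The plan is to take $g=ab$, where $1\neq b\in\mathcal G(\Gamma\setminus\{v\})$ is chosen with full support $V(\Gamma)\setminus st(v)$ (a nonempty set, since $v$ is non-central) and $1\neq a\in\mathcal G(st(v))$ with full support $st(v)$ (possible as all vertex groups are nontrivial). Then $a\notin C$ and $b\notin C$, so the word $a\cdot b$ is cyclically reduced of syllable length $2$ with respect to the amalgam, hence $g$ is loxodromic with translation length $2$; the edge $e_0$ joining the base vertices of the two factors, whose stabiliser is $C$, lies on its axis; and since the supports of $a$ and $b$ are complementary, $g$ has full support $V(\Gamma)$. So it remains to show that $C\cap g^N C g^{-N}$ is finite for $N$ large — equivalently trivial, when the vertex groups are infinite.

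The intersections $\bigcap_{k=0}^{N}g^{k}Cg^{-k}$ (which are the pointwise stabilisers of growing subsegments of the axis of $g$) are intersections of conjugates of $C$, hence parabolic subgroups of $\mathcal G$; by the descending chain condition for parabolic subgroups of graph products this chain stabilises, and its limit $Q$ is a parabolic subgroup that is contained in $C$ and normalised by $g$. Write $Q=c\,\mathcal G(\Delta)\,c^{-1}$ with $c\in C$ and $\Delta\subseteq lk(v)$. Applying the retraction $\mathcal G\to\mathcal G(V(\Gamma)\setminus lk(v))$ that kills $C$ — under which $g$ still has full support $V(\Gamma)\setminus lk(v)$ — together with the description of normalisers of parabolic subgroups of graph products, one finds that every vertex of $\Gamma$ not in $\Delta$ is adjacent to every vertex of $\Delta$; in other words $\Gamma$ is the join of $\Delta$ and its complement. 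Since $v\notin\Delta$ (because $v\notin lk(v)$) and, as $\mathcal G$ is directly indecomposable with $|V(\Gamma)|>1$, the graph $\Gamma$ is not a nontrivial join, we conclude $\Delta=\emptyset$, hence $Q=1$. Then a long enough subsegment of the axis of $g$ has trivial pointwise stabiliser, so $g$ is WPD. The technical heart of the argument is this last paragraph — the combinatorics of intersections and normalisers of parabolic subgroups of graph products, and in particular a careful analysis of the conjugating element $c$ so as to fully exploit the support of $g$ — and it is exactly here that direct indecomposability is indispensable: without it $\mathcal G$ could split as a direct product with a factor supported inside $lk(v)$, and that factor would be pointwise-fixed along the whole axis of every loxodromic element, obstructing the WPD property.
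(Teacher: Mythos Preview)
The paper does not prove this lemma at all: it is quoted verbatim as \cite[Corollary~6.19]{MinasyanOsin} and used as a black box, so there is no ``paper's own proof'' to compare against. Your sketch is therefore additional content rather than a reconstruction.

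Your strategy is sound and is essentially the one underlying the Minasyan--Osin result: pick a loxodromic $g$ with full support, use that intersections of parabolics are parabolic together with the descending chain condition to get a $g$--invariant parabolic $Q=c\,\mathcal G(\Delta)\,c^{-1}\leq C$, and then force $\Delta=\varnothing$ via the normaliser description $N_{\mathcal G}(\mathcal G(\Delta))=\mathcal G(\Delta\cup lk(\Delta))$ and direct indecomposability. One point deserves tightening. As written, your single retraction $\rho\colon\mathcal G\to\mathcal G(V(\Gamma)\setminus lk(v))$ only witnesses that every vertex of $V(\Gamma)\setminus lk(v)$ lies in $\Delta\cup lk(\Delta)$; it says nothing about vertices in $lk(v)\setminus\Delta$, so the jump to ``every vertex of $\Gamma$ not in $\Delta$ is adjacent to every vertex of $\Delta$'' is not yet justified. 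The clean fix is to argue vertex by vertex: for each $w\in V(\Gamma)$ the retraction $\pi_w\colon\mathcal G\to G_w$ is a homomorphism, so $\pi_w(c^{-1}gc)$ is conjugate to $\pi_w(g)$ and hence nontrivial by your choice of $g$ (take $a=\prod_{u\in st(v)}a_u$ and $b=\prod_{u\notin st(v)}b_u$ with each $a_u,b_u\neq 1$, so that $\pi_w(g)\neq 1$ for every $w$). Since $c^{-1}gc\in\mathcal G(\Delta\cup lk(\Delta))$, this forces $w\in\Delta\cup lk(\Delta)$ for every $w$, i.e.\ $\Gamma$ is the join of $\Delta$ and its complement, and direct indecomposability finishes as you say. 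With this adjustment the sketch is correct; note also that you do not need the parenthetical restriction to infinite vertex groups, since you actually prove $Q=1$, which is certainly finite.
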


\begin{remark}\label{rem:generalised WPD}
Suppose that $\Gamma$ is the join of the graphs $\Gamma_1, \dots, \Gamma_k$ and $\Gamma_i$ is not a join. Then $\mathcal G$ decomposes as the direct product $\mathcal G(\Gamma_1) \times \cdots \times \mathcal G(\Gamma_k)$ and $\mathcal G(\Gamma_i)$ is directly indecomposable if $\Gamma_i$ is not a central vertex. Let $v\notin Z(\Gamma)$ and suppose that $v\in \Gamma_i$. Let $T$ be the standard tree for $\mathcal{G}(\Gamma_i)$ associated to $v$. Then $\mathcal{G}$ acts on $T$ with the property that $\mathcal G(\Gamma_1) \times \cdots \times  \mathcal G(\Gamma_{i-1}) \times \mathcal G(\Gamma_{i+1}) \times \cdots \times \mathcal G(\Gamma_k)$ is in the kernel of the action and so the epimorphism $\mathcal G\to \mathcal G(\Gamma_i)$ induces an equivariant map on $T$. 

Let $h$ be the WPD element associated to the action of $\mathcal G(\Gamma_i)$ on $T$ given by Lemma \ref{lem:WPD graph products}. Then, for each vertex group $A$ associated to the action of $\mathcal G$ on $T$, we have that $A \cap A^h$ is the kernel of the action, that is, \[A \cap A^h = \mathcal G(\Gamma_1) \times \cdots \times  \mathcal G(\Gamma_{i-1}) \times \mathcal G(\Gamma_{i+1}) \times \cdots \times \mathcal G(\Gamma_k).\]
\end{remark}

It easily follows from Lemma \ref{lem:WPD graph products} that if $N < \mathcal G$ is a finitely generated full normal subgroup, then it acts minimally on $T$ as we record in the following result:

\begin{proposition}\label{prop:min tree N}
Let $\mathcal G$ be a graph product of groups, let $T$ be a standard tree for $\mathcal G$ associated to a vertex $v\notin Z(\Gamma)$ and let $N$ be a finitely generated full normal subgroup of $\mathcal G$. Then $N$ acts minimally on $T$ and the graph $N \backslash T$ is finite.
\end{proposition}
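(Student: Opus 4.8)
The plan is to run the Bass--Serre/Tits analysis of the action of $N$ on $T$, feeding in the structural information from Lemma~\ref{lem:WPD graph products} and Remark~\ref{rem:generalised WPD}. Let $\Gamma_i$ be the join factor of $\Gamma$ that contains $v$. Since $v\notin Z(\Gamma)$, the graph $\Gamma_i$ has more than one vertex and is not a join, so $\mathcal G(\Gamma_i)$ is directly indecomposable; by Remark~\ref{rem:generalised WPD} the action of $\mathcal G$ on $T$ factors through the projection $\mathcal G\twoheadrightarrow\mathcal G(\Gamma_i)$ with kernel $K=\prod_{j\ne i}\mathcal G(\Gamma_j)$, and there is a WPD element $h\in\mathcal G(\Gamma_i)$ with $A\cap A^{h}=K$ for every vertex group $A$ of this action. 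The first point to record is that $N\not\le K$: as $N$ is full, $N\cap\mathcal G(\Gamma_i)\ne 1$, whereas $\mathcal G(\Gamma_i)\cap K=1$ inside the direct product $\mathcal G(\Gamma_1)\times\cdots\times\mathcal G(\Gamma_k)$.

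Next I would show that $N$ fixes no vertex of $T$. If $N\le G_{x}$ for some vertex $x$, then normality forces $N$ into the normal core $\bigcap_{g\in\mathcal G}gG_{x}g^{-1}$ of $G_x$; this normal core coincides with the normal core of one of the two fundamental vertex groups $A$, and hence lies in $A\cap A^{h}=K$ by Remark~\ref{rem:generalised WPD}, contradicting $N\not\le K$. I would also rule out end-fixing by reducing it to $\mathcal G$: the splitting $\mathcal G(\Gamma_i)=\mathcal G(\Gamma_i\setminus\{v\})\ast_{\mathcal G(lk(v))}\mathcal G(st(v))$ is a \emph{nontrivial} amalgam ($v\notin Z(\Gamma_i)$ because $\Gamma_i$ is join-indecomposable with more than one vertex), and a group acting on the Bass--Serre tree of a nontrivial amalgam fixes no end of it, so neither $\mathcal G(\Gamma_i)$ nor $\mathcal G$ fixes an end of $T$. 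Consequently, if $N$ fixed a unique end $\xi$, then for every $g\in\mathcal G$ the end $g\xi$ would be fixed by $gNg^{-1}=N$, forcing $g\xi=\xi$ and hence $\mathcal G$ to fix $\xi$ --- impossible; and if $N$ fixed more than one end it would preserve a line, the lineal case treated below.

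Since $N$ is neither elliptic nor end-fixing, it is lineal or of general type, and in either case it possesses a unique minimal invariant subtree $T_N\subseteq T$ --- the invariant line, respectively the union of the axes of the hyperbolic elements of $N$. For every $g\in\mathcal G$, $gT_N$ is again a minimal $gNg^{-1}=N$-invariant subtree, so $gT_N=T_N$ by uniqueness; thus $T_N$ is a nonempty $\mathcal G$-invariant subtree, and since $\mathcal G$ acts minimally on $T$ we get $T_N=T$. Hence $N$ acts minimally on $T$. Finiteness of $N\backslash T$ is then the usual consequence of finite generation: choosing a finite generating set $S$ of $N$ and a base vertex $x_0$, the finite subtree $Y=\bigcup_{s\in S\cup S^{-1}}[x_0,sx_0]$ has the property that $\bigcup_{g\in N}gY$ is connected and $N$-invariant, hence equals $T$ by minimality, so $N\backslash T$ is the image of the finite set $Y$.

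The one delicate point is the exclusion of end-fixing actions --- precisely the pathology that would leave $N$ (or $\mathcal G$) without a minimal invariant subtree. Above I settle it via the nontrivial amalgam decomposition of $\mathcal G(\Gamma_i)$ over $v$; alternatively one can argue that a hyperbolic WPD element cannot coexist with a global fixed end unless the group is infinite cyclic, which a graph product on at least two nontrivial vertex groups is not. The remaining ingredients --- the normal-core computation and the span-of-generators argument for cocompactness --- are entirely routine.
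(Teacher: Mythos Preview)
Your argument is correct and follows the same overall strategy as the paper: use the WPD element from Remark~\ref{rem:generalised WPD} together with fullness to show $N$ cannot sit inside a vertex stabiliser, deduce the existence of a unique minimal $N$-invariant subtree, promote it to a $\mathcal G$-invariant subtree by normality, and conclude $T_N=T$ by minimality of the $\mathcal G$-action; then obtain cocompactness from finite generation.

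The one noteworthy difference is that you take a detour through the Tits classification and separately exclude end-fixing (parabolic and focal) actions. The paper avoids this entirely: it argues that if every element of $N$ were elliptic then, by finite generation, $N$ would fix a vertex (Serre's lemma), and then the conjugation $N=N^{h}\le A\cap A^{h}=K$ yields the contradiction. Once $N$ contains a hyperbolic element, a unique minimal $N$-invariant subtree exists automatically---this holds in the focal case as well, so there is no need to rule out a fixed end. Your end-fixing discussion is therefore correct but superfluous, and the normal-core computation you give is just a repackaging of the paper's one-line observation $N=N^{h}\le A\cap A^{h}$. For the finiteness of $N\backslash T$ the paper simply cites \cite[Proposition~7.9]{Bass}, which is exactly your span-of-generators argument.
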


\begin{proof}
Let $\mathcal G=\mathcal G(\Gamma_1) \times \cdots \times \mathcal G(\Gamma_k)$ be the direct product decomposition induced from the join decomposition of $\Gamma$. In particular, $\mathcal G(\Gamma_i)$ is directly indecomposable for $\Gamma_i$ not a central vertex. 

Since by assumption $v\notin Z(\Gamma)$, then $v\in \Gamma_i$ for some $i$ and $\Gamma_i$ has more than one vertex. Let $T$ be the standard tree associated to $v$.

The group $N$ is a subgroup of $\mathcal G$, so it also acts on the tree $T$. Suppose that all the elements of $N$ are elliptic. Since the group $N$ is finitely generated, this implies that $N$ is a subgroup of a vertex group, say $A$. 

Let $h\in \mathcal G$ be a WPD element for the action of $\mathcal G(\Gamma_i)$ on $T$, which exists by Lemma \ref{lem:WPD graph products}. Then, $N= N^h$ is a subgroup of $A^h$, so 
\[
N < A \cap A^h.
\]
But the intersection $A \cap A^h$ is contained in the kernel of the action, so it lies inside the direct product $\mathcal G(\Gamma_1) \times \dots \times  \mathcal G(\Gamma_{i-1}) \times \mathcal G(\Gamma_{i+1}) \times \dots \times \mathcal G(\Gamma_k)$ (see Remark \ref{rem:generalised WPD}) and this contradicts the fact that $N$ is full.

Hence, $N$ contains a hyperbolic element and there is $T_0$ a unique $N$-invariant subtree. Since $N$ is normal in $\mathcal G$, the $N$-invariant subtree $T_0$ is also invariant under the action of $G$. But $T$ is minimal as a $\mathcal G$-tree, so $T_0= T$.

Since the group $N$ is finitely generated by assumption and $N$ acts minimally on $T$, by \cite[Propositon 7.9]{Bass}, the graph $N \backslash T$ is finite.
\end{proof}

We next prove the main result.

\begin{theorem}\label{thm:main}
Let $\mathcal G=\mathcal G(\Gamma, \{G_v\}_{v\in V\Gamma})$ be a graph product of groups and let $N< \mathcal{G}$ be a finitely generated full normal subgroup of $\mathcal{G}$. If each central vertex group $G_v$ satisfies that all proper quotients are abelian-by-finite and finite-by-abelian, then $G\slash N$ is both finite-by-abelian and abelian-by-finite, i.e. we have the following exact sequences:
$$
1 \to A \to G\slash N \to Q \to 1
$$
and
$$
1\to Q^{\prime} \to G\slash N \to A^{\prime} \to 1
$$
where $A$ and $A^{\prime}$ are abelian and $Q$ and $Q^{\prime}$ are finite groups.
\end{theorem}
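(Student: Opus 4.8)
The plan is to induct on the number of central vertices of $\Gamma$, reducing to the case where $\mathcal G$ has no central vertex group, and then to extract the abelian quotient from the action on a standard tree. First I would dispose of the base of the induction on $\lvert Z(\Gamma)\rvert$. If $\Gamma$ has no vertex at all the statement is vacuous; if $\Gamma$ is a single vertex $v$ then $\mathcal G = G_v$, $N$ is a nontrivial (hence proper, as $N$ is finitely generated and $G_v$ need not be) normal subgroup, and the hypothesis on proper quotients of central vertex groups applies directly. More generally, if every vertex of $\Gamma$ is central then $\mathcal G = G_{v_1}\times\cdots\times G_{v_m}$ is a direct product of the (central) vertex groups; since $N$ is full it projects to a nontrivial normal subgroup $N_j \trianglelefteq G_{v_j}$ in each factor (here I use that the projection of a normal subgroup is normal), and $N_j$ is finitely generated, so $G_{v_j}/N_j$ is abelian-by-finite and finite-by-abelian by hypothesis. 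Now $G/N$ surjects onto $\prod_j G_{v_j}/N_j$, but that is not quite enough — I need to instead observe that $G/N$ is itself a quotient of $\prod_j G_{v_j}/N_j$ is false; the correct move is: $N \supseteq N_1\times\cdots\times N_m$, so $G/(N_1\times\cdots\times N_m) \cong \prod_j G_{v_j}/N_j$ is abelian-by-finite and finite-by-abelian (both classes are closed under finite direct products), and $G/N$ is a quotient of it; both classes are closed under quotients, which finishes the fully-central case.

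Next, the inductive step when $\Gamma$ has at least one non-central vertex $v \notin Z(\Gamma)$. Let $T$ be the standard tree of $\mathcal G$ associated to $v$, coming from the splitting \eqref{eq:splitting}. By Proposition \ref{prop:min tree N}, $N$ acts minimally on $T$ with $N\backslash T$ finite, so in particular $N$ contains a hyperbolic element and the action of $G/N$ on $N\backslash T$ — wait, $N$ need not be the kernel of the $G$-action on $T$, so $G/N$ does not act on $N\backslash T$ directly; instead I would argue as follows. The $G$-action on $T$ has some kernel $K$, and by Remark \ref{rem:generalised WPD}, combined with the fact that $N$ is full and contains a hyperbolic element, one gets control over $N\cap K$ versus $K$. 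The cleaner approach: since $N$ acts minimally and cocompactly on $T$, the quotient group $\overline G := G/\langle\langle N\rangle\rangle = G/N$ acts on the finite graph $N\backslash T$; more precisely $N\backslash T$ is acted on by the finite group $G/NK$ where $K$ is the $G$-kernel, and the relevant point is that $\mathcal G$ acts on $T$ with vertex stabilizers $A$ satisfying $A\cap A^h = $ the product of the other join factors (Remark \ref{rem:generalised WPD}); intersecting with $N$ and using fullness forces $N$ to surject onto each $\mathcal G(\Gamma_j)$ for $j\ne i$, and to project to a full finitely generated normal subgroup of $\mathcal G(\Gamma_i)$ as well. This lets me peel off the factors $\mathcal G(\Gamma_j)$, $j\ne i$: modulo the normal closure of the part of $N$ inside them, $G/N$ becomes a quotient of $\mathcal G(\Gamma_i)/N_i$ where $\Gamma_i$ contains a non-central vertex and $N_i$ is finitely generated, full, normal — so by induction on the number of join factors (or on $\lvert V(\Gamma)\rvert$) we are reduced to $\Gamma$ indecomposable as a join and with no central vertex.

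For the core case — $\Gamma$ a finite graph, indecomposable as a join, with $\lvert V(\Gamma)\rvert \ge 2$ — I would use the WPD element $h$ from Lemma \ref{lem:WPD graph products} acting on the standard tree $T$. Since $N$ is normal, $G$ permutes the axis of $h$ translated around; the quotient $G/N$ acts on $N\backslash T$, a finite graph, and I want to show the image of $G$ in $\mathrm{Aut}(N\backslash T)$-plus-some-edge-data is what controls $G/N$ up to a finite-by-abelian, abelian-by-finite error. Concretely: a finitely generated group acting minimally and cocompactly on a tree with the property that some element acts hyperbolically with WPD-type malnormal stabilizers yields, after passing to $G/N$, an action of $G/N$ on a finite graph of groups whose edge and vertex groups have collapsed — the key input being that $N$ already contains the vertex stabilizers' "thick" part, so $G/N$ ends up virtually free abelian, via the translation-length homomorphism on the line covered by the $h$-axis. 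The main obstacle I expect is precisely this last step: correctly packaging "$N$ acts minimally and cocompactly and contains enough" into the conclusion that $G/N$ is abelian-by-finite, and then upgrading to finite-by-abelian by showing a suitable finite-index abelian subgroup is central. For the finite-by-abelian upgrade I would follow the strategy the introduction flags: find a finite-index abelian subgroup $A' \le G/N$ and show $[G/N : C_{G/N}(A')] < \infty$ and in fact $A'$ is central, so $[G/N, G/N]$ is finite; this uses that commutators into an abelian finite-index subgroup are constrained and that $G/N$ is finitely generated. Closure of both classes under extensions-by-finite and quotients then assembles the two exact sequences in the statement.
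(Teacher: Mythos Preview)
Your proposal has a real error in the fully-central base case and a genuine gap in the core case.

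In the fully-central case you define $N_j$ as the \emph{projection} of $N$ to $G_{v_j}$ and then assert $N \supseteq N_1\times\cdots\times N_m$. For projections the containment goes the other way, $N \subseteq \prod_j N_j$, so $G/N$ only surjects onto $\prod_j G_{v_j}/N_j$, which as you yourself note is not enough. The fix is to take $N_j = N\cap G_{v_j}$ instead: these are nontrivial precisely by fullness (this is what ``full'' means), they need not be finitely generated but the hypothesis on central vertex groups concerns \emph{all} proper quotients, and now $\prod_j N_j \subseteq N$ so $G/N$ is a quotient of $\prod_j G_{v_j}/N_j$ as desired. This is exactly how the paper handles the central factors (all at once, not by induction).

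The more serious problem is the core case. After reducing to $\Gamma$ with no central vertex you invoke the standard tree $T$, the minimality and cocompactness of the $N$-action from Proposition~\ref{prop:min tree N}, and the WPD element, but you never extract a usable statement. The claims that ``$N$ surjects onto each $\mathcal G(\Gamma_j)$ for $j\ne i$'', that $G/NK$ is finite, and that a ``translation-length homomorphism on the $h$-axis'' controls $G/N$ are all unjustified; fullness gives nontrivial intersections, not surjections, and a single hyperbolic axis gives at best a map to $\Z$, which says nothing about virtual abelianness of the whole quotient. What is missing is the one concrete computation the paper actually does. Finiteness of $N\backslash T$ says the edge double-coset space $N\backslash \mathcal G/\mathcal G(lk(v))$ is finite; since $N$ is normal this means $N\,\mathcal G(lk(v))$ has finite index in $\mathcal G$. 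Doing this for \emph{every} vertex $v$ and setting $H=\bigcap_{v} N\,\mathcal G(lk(v))$ gives a finite-index subgroup, and a one-line commutator identity shows $H/N$ is central in $\mathcal G/N$: writing $h\in H$ as $h=n h_v$ with $n\in N$, $h_v\in\mathcal G(lk(v))$, one gets $[h,g_v]=[n,g_v]^{h_v}[h_v,g_v]\in N$ for every generator $g_v$ of every $G_v$. That produces the central finite-index subgroup your final paragraph needs as input to Schur; without it, the abelian-by-finite conclusion is simply not established.
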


\begin{proof}

The graph product $\mathcal G$ is of the form 
\[
\prod_{v\in Z(\Gamma)} G_v \times \mathcal G(\Gamma\setminus Z(\Gamma)).
\]
Since by assumption $N$ is full, we have that $N \cap G_v\ne 1$ for all $v\in Z(\Gamma)$. Then $N \cap G_v$ is a normal subgroup (possibly not finitely generated) of $G_v$ for $v\in Z(\Gamma)$. By the assumption on the central vertex groups, we have that $G_v\slash (N \cap G_v)$ is abelian-by-finite and finite-by-abelian.

Let $p$ be the epimorphism $p:\mathcal{G} \mapsto \mathcal G(\Gamma\setminus Z(\Gamma))$ and let $B=\prod_{v\in Z(\Gamma)} G_v$ be the kernel of $p$. We next show that the quotient group $\mathcal G \slash N$ is isomorphic to the direct product $B \slash (B\cap N) \times \mathcal G(\Gamma\setminus Z(\Gamma)) \slash p(N)$. Indeed, consider the following short exact sequence:
\[
1 \to BN \slash N \to \mathcal{G} \slash N \to \frac{\mathcal G \slash N }{BN \slash N} \to 1.
\]
Note that $BN= B \times p(N)$, so
\begin{equation}\label{eq:isomorphism}
\frac{\mathcal G \slash N }{BN \slash N} \cong \mathcal G \slash BN = \mathcal G \slash (B\times p(N)) \cong \mathcal G(\Gamma\setminus Z(\Gamma)) \slash p(N),
\end{equation}
and the isomorphism sends an element $yp(N) \in \mathcal G(\Gamma\setminus Z(\Gamma)) \slash p(N)$ to $(1,y)(N) \frac{BN}{N} \in \frac{\mathcal G \slash N }{BN \slash N}$.

Moreover, from the isomorphism \eqref{eq:isomorphism} we also get that the exact sequence right splits. In addition, $BN \slash N \cong B \slash (B \cap N)$, so $BN \slash N$ is generated by elements of the form $(x,1)N$ for $x\in B$. As a consequence, the group $\mathcal G \slash N$ is isomorphic to the direct product $BN \slash N \times \mathcal G(\Gamma\setminus Z(\Gamma)) \slash p(N)$.

Since $BN \slash N\cong B \slash (B \cap N)$, $BN \slash N$ is a quotient of $B$ and by the assumption on the central vertex groups, it follows that it is abelian-by-finite and finite-by-abelian. Therefore, it suffices to show that $\mathcal G(\Gamma\setminus Z(\Gamma)) \slash p(N)$ is abelian-by-finite and finite-by-abelian. The group $p(N)$ is a nontrivial finitely generated full normal subgroup of $\mathcal G(\Gamma\setminus Z(\Gamma))$ and $\Gamma \setminus Z(\Gamma)$ is a graph without central vertices. Hence, it suffices to prove the statement for graph products whose defining graph does not have central vertices and, without loss of generality, we further assume that $\Gamma$ does not have central vertices.

Let $v\in V(\Gamma)$. Consider the splitting described in Equation \eqref{eq:splitting} and let $T$ be the standard tree of $\mathcal{G}$ associated to $v$. Now, by Proposition \ref{prop:min tree N}, the graph $N \backslash T$ is finite, so in particular, the number of edges in the graph needs to be finite. Therefore,
\[ \big{|} N \backslash \mathcal G \slash \mathcal G(lk(v)) \big{|} < \infty.\]
The subgroup $N$ is normal in $\mathcal G$, so the double coset $N \backslash \mathcal G \slash \mathcal G(lk(v))$ coincides with the single coset $\mathcal G \slash N \mathcal G(lk(v)).$ Hence, since $\big{|}\mathcal G \slash (N\mathcal G(lk(v))) \big{|} < \infty$, the subgroup $N\mathcal G(lk(v))$ has finite index in $\mathcal G$.

Since the number of vertices in $\Gamma$ is finite, we have that
\[
H=\bigcap\limits_{v\in V(\Gamma)} N \mathcal G(lk(v))
\]
has also finite index in $\mathcal G$ for being a finite intersection of finite index subgroups.

We next show that $H \slash N$ is central in $G \slash N$. Let $g_v$ be a generator of the vertex group $G_v$ and let $h\in H$. We want to show that $[h,g_v] \in N$. Since $h\in H=\bigcap\limits_{v\in V(\Gamma)} N \mathcal G(lk(v))$, there is $n\in N$ and $h_v\in \mathcal G(lk(v))$ such that $h=nh_v$. Then
\[
[h,g_v]=[nh_v, g_v]=[n,g_v]^{h_v} [h_v,g_v]
\]
where the last equality follows from the commutator identities. Since $h_v\in \mathcal G(lk(v))$ and $g_v\in G_v$, we have that $[g_v,h_v]=1$. Furthermore, $[n,g_v]^{h_v} \in N$ since $n\in N$ and $N$ is a normal subgroup. It follows that $[h,g_v] \in N$ for each $v\in V(\Gamma)$, each generator $g_v$ of the vertex group $G_v$ and each $h\in H$. Hence $H\slash N$ is central in $G\slash N$. Since $H$ is of finite in $G$, $H\slash N$ is of finite index index in $G\slash N$. It follows that $G\slash N$ is virtually abelian. Furthermore, the subgroup $H\slash N$ is central and of finite index in $G\slash N$, so the center of $G\slash N$ has also finite index in $G \slash N$. As a consequence, by the result of Schur in \cite{Schur} we have that the commutator subgroup of $G\slash N$, namely $[G,G]N\slash N$, is finite and so $G\slash N$ is also finite-by-abelian.
\end{proof}

Since RAAGs and RACGs are special cases of graph products of groups satisfying the hypothesis of Theorem \ref{thm:main}, we obtain the following:

\begin{corollary}\label{cor:mainRAAGs}
Let $G$ be a RAAG (respectively a RACG).

Let $N<G$ be a finitely generated full normal subgroup of $G$. Then $G\slash N$ is finite-by-abelian and abelian-by-finite.

Let $N<G$ be a finitely generated normal subgroup of $G$. Then $G \slash N$ is virtually a RAAG (respectively a RACG).
\end{corollary}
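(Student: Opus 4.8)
I would deduce the second statement from Theorem \ref{thm:main} by reducing to the full case.

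\emph{Reduction.} Write $G=G_1\times\dots\times G_k$ with each $G_i$ directly indecomposable: so $G_i$ is either infinite cyclic (a central vertex group) or a non-cyclic RAAG, and in the latter case $Z(G_i)=1$. If $N\cap G_i=1$ for some non-cyclic $G_i$, then $[N,G_i]\le N\cap G_i=1$, so $N$ centralises $G_i$, hence $N\le\prod_{j\ne i}G_j$ and $G/N\cong G_i\times\big((\prod_{j\ne i}G_j)/N\big)$; since $G_i$ is a RAAG we may split it off. After finitely many such steps we reach $\widehat G=\mathbb Z^{m}\times H$, where $\mathbb Z^{m}=Z(\widehat G)$ is the product of the cyclic factors, $H$ is a RAAG with no central vertex, and $N\trianglelefteq\widehat G$ is finitely generated and meets every directly indecomposable factor of $H$ nontrivially. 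It suffices to show $\widehat G/N$ is virtually abelian: then, restoring the split-off factors, $G/N$ is $(\text{a RAAG})\times(\text{a finitely generated virtually abelian group})$, which is virtually a RAAG. The RACG case is identical (with $\mathbb Z/2$ for $\mathbb Z$), and is even easier because there $Z(\widehat G)$ is finite, so the last step below becomes trivial.

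\emph{Using the theorem.} Let $\rho\colon\widehat G\to H$ kill $\mathbb Z^{m}$ and set $\bar N=\rho(N)$; this is finitely generated, normal in $H$, and \emph{full} in $H$ (it contains each $N\cap H_i\ne 1$). By Theorem \ref{thm:main} applied to $H$ and $\bar N$, the quotient $H/\bar N$ is abelian-by-finite and finite-by-abelian, hence virtually abelian. Pass to a finite-index $H_1\le H$ with $\bar N\le H_1$ and $H_1/\bar N\cong\mathbb Z^{r}$ free abelian, and replace $\widehat G$ by $\widehat G_1=\mathbb Z^{m}\times H_1$; then $1\to\bar Z\to \widehat G_1/N\to\mathbb Z^{r}\to1$ is exact, with $\bar Z=\mathbb Z^{m}N/N$ finitely generated abelian and central in $\widehat G_1/N$.

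\emph{Reduction to a vanishing statement.} Normality of $N$ in $\widehat G$ gives $[\bar N,H]\le N\cap H$ (write an element of $N$ as $(z,h_0)$, conjugate by $h\in H$, and multiply by its inverse to land on $(1,[h_0,h])\in N$); since $H_1/\bar N$ is abelian, $[H_1,H_1]\le\bar N$, so $\gamma_3(H_1)\le[\bar N,H]\le N$. Hence in $E:=\widehat G_1/N$ the subgroup $C:=(\mathbb Z^{m}\times\bar N)/N$ is finitely generated, abelian (because $[\bar N,\bar N]\le N\cap H$) and central (because $[\bar N,H_1]\le N$), with $E/C\cong\mathbb Z^{r}$. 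Thus $E$ is finitely generated nilpotent of class $\le 2$, and is virtually abelian if and only if $[E,E]$ is finite. Now $[E,E]$ is the image of $[H_1,H_1]$ in $E$; since $\bar N\to C$ is invariant under $H_1$-conjugation it factors through the coinvariants $(\bar N^{\mathrm{ab}})_{\mathbb Z^{r}}$, so $[E,E]$ is finite as soon as $[H_1,H_1]$ has torsion image in $(\bar N^{\mathrm{ab}})_{\mathbb Z^{r}}$. By the five-term exact sequence of $1\to\bar N\to H_1\to\mathbb Z^{r}\to1$, that image lies in $\operatorname{coker}\!\big(H_2(H_1;\mathbb Z)\to H_2(\mathbb Z^{r};\mathbb Z)\big)$, so it is enough to prove that $H_2(H_1;\mathbb Q)\to H_2(\mathbb Z^{r};\mathbb Q)=\bigwedge^{2}\mathbb Q^{r}$ is surjective.

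\emph{The crux.} This last surjectivity is where finite generation of $N$ is used essentially, and is the only non-routine point. Finite generation of $\bar N=\ker(H_1\to\mathbb Z^{r})$ forces every character of $H_1$ factoring through $\mathbb Z^{r}$ into $\Sigma^{1}(H_1)$ (Bieri--Strebel), and, together with fullness, this ``connectivity'' is exactly what makes $H_2$ surject — for $H$ itself a RAAG it is the statement that the classes $e_i\wedge e_j$ over edges $ij$ already surject onto $\bigwedge^{2}$ of the abelianisation of the free-abelian quotient. Granting this, $[E,E]$ is finite, $\widehat G/N$ is virtually abelian, and $G/N$ is virtually a RAAG. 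Finite generation of $N$ cannot be dropped here: the Heisenberg group is a quotient of the RAAG $F_2\times\mathbb Z$, but only with the infinitely generated kernel $\gamma_3(F_2)$, so the argument must genuinely convert finite generation (and fullness) of $\bar N$ into homological control killing the commutator pairing $\bigwedge^{2}\mathbb Z^{r}\to C$.
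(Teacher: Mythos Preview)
Your reduction to $\widehat G=\mathbb Z^{m}\times H$ by splitting off the non-cyclic directly indecomposable factors that $N$ misses is correct and is exactly the step needed to pass from the general statement to the full case. After that, however, you take a long detour through the five-term sequence and an $H_2$-surjectivity claim that you yourself flag as ``the crux'' and then merely \emph{grant}. That is a genuine gap: the implication ``$\ker(H_1\to\mathbb Z^r)$ finitely generated $\Rightarrow$ $H_2(H_1;\mathbb Q)\twoheadrightarrow\bigwedge^2\mathbb Q^r$'' is not a standard consequence of Bieri--Strebel, and your justification (``classes $e_i\wedge e_j$ over edges surject'') is a RAAG-specific statement about $H$, not about the finite-index subgroup $H_1$, which is typically not a RAAG. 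So as written the argument is incomplete precisely at the point you identify as essential.

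The paper treats the corollary as immediate from Theorem \ref{thm:main}, and indeed the full case is. For the non-full case the intended (and much shorter) route is to notice that the \emph{proof} of Theorem \ref{thm:main} already applies to your $\widehat G$ and $N$: Proposition \ref{prop:min tree N} only uses that $N$ meets the directly indecomposable factor containing the chosen non-central vertex $v$ (to rule out $N$ lying in the kernel of the action), and this holds for every non-central $v$ by your reduction. Hence for every non-central $v$ one gets $|\,\widehat G/N\mathcal G(lk(v))\,|<\infty$, and the commutator computation at the end of the proof shows $\bigcap_{v\notin Z(\Gamma)} N\mathcal G(lk(v))/N$ is central of finite index in $\widehat G/N$ (the central generators contribute trivially because they commute with everything). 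Schur then gives finite-by-abelian. This bypasses entirely the central-extension obstruction you are trying to kill homologically: you never need to compare $\bar N$ with $N\cap H$ or control the pairing $\bigwedge^2\mathbb Z^r\to C$.
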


\section{Corollaries of the main result}

In this section we focus on the class of RAAGs and prove some interesting consequences for finitely generated normal subgroups of RAAGs. Notice that most of the corollaries also hold for the class of RACGs.

\begin{corollary}\label{corollary1}
The set of finitely generated normal subgroups of right-angled Artin groups is countable.
\end{corollary}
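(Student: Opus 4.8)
The plan is to deduce this from two elementary countability facts, the second of which is where normality does all the work. First, a right-angled Artin group is determined by a finite simplicial graph, and there are only countably many finite simplicial graphs (on the vertex set $\{1,\dots,n\}$ there are at most $2^{\binom{n}{2}}$ of them, and $n$ ranges over $\mathbb{N}$); hence there are, up to isomorphism, only countably many RAAGs. Fix a set $\{G_i : i\in\mathbb{N}\}$ of RAAGs containing a representative of every isomorphism class. Each $G_i$ is finitely generated, hence a countable group, so it has only countably many finite subsets.

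The decisive observation is that every finitely generated normal subgroup of $G_i$ is the normal closure in $G_i$ of a finite subset of $G_i$. Indeed, let $N$ be a normal subgroup of $G_i$ generated as a group by a finite set $S\subseteq G_i$. On one hand the normal closure $\langle\!\langle S\rangle\!\rangle_{G_i}$ is contained in $N$, since $N$ is normal and contains $S$; on the other hand $N=\langle S\rangle\subseteq\langle\!\langle S\rangle\!\rangle_{G_i}$. Hence $N=\langle\!\langle S\rangle\!\rangle_{G_i}$. Consequently $G_i$ has at most as many finitely generated normal subgroups as it has finite subsets, i.e. countably many, and taking the union over $i\in\mathbb{N}$ shows that there are only countably many finitely generated normal subgroups of RAAGs (and \emph{a fortiori} only countably many isomorphism classes among them).

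I do not expect any genuine obstacle here; the argument is soft, and the only point worth emphasising is that normality is exactly what makes it go through. Without normality one cannot encode a finitely generated subgroup by a finite amount of data that is stable under conjugation, and in fact, by Baumslag and Roseblade, there are continuum-many isomorphism classes of finitely generated subgroups already in the direct product of two finitely generated nonabelian free groups, which is itself a RAAG. Thus Corollary~\ref{corollary1} is best read as a companion to Corollary~\ref{cor:mainRAAGs}: the structural restriction on quotients forces, in particular, that the family of finitely generated normal subgroups is as small as possible.
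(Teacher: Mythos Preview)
Your argument is correct and considerably more elementary than the paper's. The key observation --- that a finitely generated normal subgroup $N=\langle S\rangle$ of any group $G$ coincides with the normal closure $\langle\!\langle S\rangle\!\rangle_G$ --- is valid for arbitrary groups, so your proof actually shows that any countable family of countable groups has, in total, only countably many finitely generated normal subgroups. No structural information about RAAGs is used beyond the facts that there are countably many of them and that each is countable.

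The paper takes a different route, deriving the countability from the main structural result (Corollary~\ref{cor:mainRAAGs}): every finitely generated full normal subgroup of a RAAG has finite index in the kernel of a homomorphism to an abelian group, and there are only countably many such kernels (they correspond to subgroups of the finitely generated free abelianisation) and only countably many finite-index subgroups of each. Your argument bypasses all of this. What the paper's approach buys is that it exhibits Corollary~\ref{corollary1} as a visible consequence of the main theorem, reinforcing the narrative that the structure theorem sharply constrains the family of finitely generated normal subgroups; what your approach buys is a self-contained proof that does not depend on anything else in the paper and that makes transparent exactly where normality enters.
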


\begin{proof}
Any finitely generated normal subgroup is full in some RAAG and since the set of RAAGs is countable, it suffices to show that the set of finitely generated full normal subgroups of a given RAAG $G$ is countable.

From Corollary \ref{cor:mainRAAGs} we get that finitely generated full normal subgroups of $G$ are of finite index in kernels onto abelian groups. 

Let us show first that the set of finitely generated co-abelian normal subgroups of $G$ is countable. Indeed, there is a one-to-one correspondence between normal co-abelian subgroups and subgroups of the abelianisation of $G$. Since the abelianisation of $G$ is a free abelian group of finite rank, the number of subgroups is countable, and so is the set of finitely generated normal co-abelian subgroups of $G$. 

Now, the set of finite index subgroups of a finitely generated group is countable, and this concludes the proof.
\end{proof}

The decidability of the algorithmic problems is deduced also from the specific structure of quotients of RAAGs with finitely generated kernel:

\begin{corollary} \label{corollary2}
The word, conjugacy and membership problems are decidable for finitely generated normal subgroups of RAAGs.
\end{corollary}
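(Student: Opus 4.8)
The plan is to reduce the word, conjugacy and membership problems for a finitely generated normal subgroup $N$ of a RAAG $G$ to known decidability results, using the structural description of $G/N$ provided by Corollary \ref{cor:mainRAAGs}. First I would treat the full case: by Corollary \ref{cor:mainRAAGs}, $G/N$ is finite-by-abelian, hence in particular it is a virtually abelian group, and moreover by Corollary \ref{corollary4} (or rather, by the discussion preceding it) $N$ is commensurable with the kernel of a character, so it is of Stallings--Bieri type. Since $G$ is biautomatic (indeed, has a nice solution to the word and conjugacy problems) and $G/N$ is a computable virtually abelian group, I would invoke results from \cite{Bridson2} on subgroups $N$ which arise as kernels of maps to virtually abelian quotients: such $N$ are themselves quasi-convex-like and inherit decidable word, conjugacy and membership problems. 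Concretely, given a finite generating set of $N$ as words in the generators of $G$, one first computes a finite-index abelian (in fact central, by Theorem \ref{thm:main}) subgroup of $G/N$, lifts a corresponding finite-index subgroup $H \leq G$, and uses that $N \leq H$ with $H/N$ abelian; the membership problem for $N$ in $H$ then reduces to linear algebra over $\mathbb{Z}$ after passing to the abelianisation of $H$, and membership of $N$ in $G$ reduces to this plus the (decidable) membership problem for the finite-index subgroup $H$ in $G$.

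Next I would handle the word problem for $N$: this is immediate since $N$ is a finitely generated subgroup of the RAAG $G$, which has solvable word problem, and one can decide whether a given word in the generators of $N$ represents the identity simply by rewriting it as a word in the generators of $G$ and applying the word problem for $G$. For the conjugacy problem within $N$, the strategy is to use conjugacy separability: by Corollary \ref{corollary3}, $N$ is (hereditarily) conjugacy separable and it is finitely presented (being of Stallings--Bieri type, or by citing the relevant finiteness from \cite{Bridson2}), and a finitely presented conjugacy separable group has solvable conjugacy problem by the classical argument of Mal'cev \cite{Malcev} --- one enumerates finite quotients to detect non-conjugacy and enumerates conjugators to detect conjugacy. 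Alternatively, and perhaps more uniformly, one can cite \cite{Bridson2} directly for the decidability of the conjugacy problem in $N$, since the quotient $G/N$ being virtually abelian places $N$ in the class of subgroups handled there.

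Finally, to cover the general (not-necessarily-full) case, I would reduce to the full case as follows. Given a finite generating set of a finitely generated normal subgroup $N$ of $G$, one can algorithmically determine the direct-product decomposition $G = GX_1 \times \cdots \times GX_k$ into directly indecomposable RAAGs (this is combinatorial, from the defining graph), then algorithmically detect which factors $N$ intersects trivially: this amounts to deciding membership of each $GX_i$ in the (normal) subgroup $N$ intersected with $GX_i$, which is the membership problem already solved, or can be done by checking whether the projection of $N$ to the complementary factor is injective on $GX_i$. Discarding the trivially-intersected factors, $N$ becomes a full normal subgroup of a smaller RAAG $G'$, and the word, conjugacy and membership problems for $N$ in $G'$ are equivalent to those in $G$ (since $G'$ is a retract of $G$ and $N \leq G'$), so the full case applies.

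The main obstacle I expect is the precise invocation of the results from \cite{Bridson2}: one must check that the hypotheses there --- which presumably concern subgroups of special groups (or RAAGs) whose quotient is virtually abelian, or kernels of characters together with their finite-index overgroups --- are met here and that the algorithms are genuinely uniform, i.e. take as input only a finite generating set of $N$ and produce the decision procedure effectively. Establishing the algorithmic passage from a generating set of $N$ to an explicit character $\chi$ with $N \leq \ker\chi$ of finite index (using Corollary \ref{corollary4}'s algorithm or an ad hoc computation in the abelianisation of a finite-index subgroup) is the technical heart; once this is in place, the word and membership problems are linear algebra and the conjugacy problem follows either from \cite{Bridson2} or from conjugacy separability plus finite presentability via Mal'cev.
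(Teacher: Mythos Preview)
Your overall strategy is sound and you do eventually land on the same tools the paper uses, but the route is considerably more circuitous than necessary and one of your proposed arguments contains a genuine gap.

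For the membership problem you work hard to lift a finite-index subgroup $H\le G$ and reduce to linear algebra in its abelianisation. The paper's argument is one line: the membership problem for $N$ in $G$ is decidable if and only if the word problem in $G/N$ is decidable, and by Corollary~\ref{cor:mainRAAGs} the quotient $G/N$ is virtually a RAAG (virtually abelian in the full case), so its word problem is decidable. No passage to $H$, no characters, no reduction to the full case is needed.

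For the conjugacy problem, your primary route via Mal'cev is flawed: you assert that $N$ is finitely presented ``being of Stallings--Bieri type''. This is false in general. Bestvina--Brady subgroups are precisely finitely generated full normal subgroups of RAAGs (kernels of characters) that can fail to be finitely presented; this is exactly the phenomenon studied in \cite{BestvinaBrady} and \cite{MeierMeinertVanWyk}. Without finite presentability the Mal'cev argument does not apply, since the naive enumeration of consequences of the relations is unavailable. Your fallback --- citing \cite{Bridson2} directly --- is correct and is exactly what the paper does: Bridson shows that a normal subgroup $N$ of a bicombable group $G$ has decidable conjugacy problem whenever $G/N$ has decidable generalised word problem, and here $G$ is CAT(0) (hence bicombable) while $G/N$ is virtually abelian.

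Finally, the elaborate reduction from the general case to the full case, and the concern about uniformity of the algorithms in the generating set of $N$, are unnecessary for the statement as phrased: decidability of the word and membership problems follows immediately from the ambient group and the structure of the quotient, and the conjugacy argument via \cite{Bridson2} needs only that $G/N$ has solvable generalised word problem. The paper dispatches all three problems in a paragraph.
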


\begin{proof}
The decidability of the word problem is inherited by subgroups and RAAGs do have decidable word problem.

Note that the membership problem for $N$ in a group $G$ is decidable if and only if the word problem for $G \slash N$ is decidable. By Corollary \ref{cor:mainRAAGs}, if $G$ is a RAAG and $N< G$ is a finitely generated normal subgroup, then $G \slash N$ is virtually a RAAG, so the membership problem is also decidable.

Finally, for the conjugacy problem, Bridson proved in \cite{Bridson2} that a normal subgroup of a bicombable group $G$ has decidable conjugacy problem, provided $G \slash N$ has decidable generalised word problem. RAAGs are bicombable since they act properly and cocompactly on CAT(0) spaces. Moreover, if $G$ is a RAAG and $N$ is a finitely generated full normal subgroup of $G$, then $G \slash N$ is virtually abelian (see Corollary \ref{cor:mainRAAGs}), so in particular, $G \slash N$ has decidable generalised word problem and this concludes the proof.
\end{proof}

Minasyan independently showed in \cite{Minasyan} that if $G$ is a RAAG, $N$ is a finitely generated normal subgroup of $G$ and $G \slash N$ is subgroup separable, then $N$ has decidable conjugacy problem (see \cite[Theorem 2.8]{Minasyan}). Thus, Corollary \ref{corollary2} might be also viewed as a consequence of that result. In fact, Minasyan proved that result by studying the conjugacy separability of normal subgroups of RAAGs:

\begin{theorem}\cite[Corollary 11.3]{Minasyan}
Let $N$ be a finitely generated normal subgroup of a right-angled Artin group $G$ such that the quotient $G \slash N$ is virtually polycyclic. Then $N$ is hereditarily conjugacy separable and has decidable conjugacy problem.
\end{theorem}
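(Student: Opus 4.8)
The plan is to derive both assertions from the hereditary conjugacy separability of the ambient RAAG $G$ (proved by Minasyan in \cite{Minasyan}) together with the good profinite behaviour of virtually polycyclic groups, following the strategy of \cite{Minasyan}. The first step is to reduce hereditary conjugacy separability of $N$ to plain conjugacy separability of a suitable class of subgroups. Note that the class of pairs under consideration is stable under passing to finite-index subgroups: if $M$ has finite index in $N$, then $M$ is again a finitely generated subgroup of finite index in the finitely generated normal subgroup $N$ of the RAAG $G$, with $G/N$ virtually polycyclic. Hence it suffices to prove that every such $M$ is conjugacy separable; applying this to all finite-index subgroups of $N$ then shows that $N$ is hereditarily conjugacy separable. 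It is also convenient to record that, since $N$ is finitely generated, it has only finitely many subgroups of index $[N:M]$, so $M$ has only finitely many $G$-conjugates and its normal core $K=\mathrm{core}_G(M)$ has finite index in $N$; thus $K\trianglelefteq G$ is finitely generated, $K\leq M$, and $G/K$ is again virtually polycyclic. (By Corollary~\ref{cor:mainRAAGs}, after restricting to a direct factor of $G$ one could even assume $G/N$, hence $G/K$, virtually abelian, but this stronger hypothesis is not needed.)

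To prove conjugacy separability of such an $M$, take $x,y\in M$ with $x\not\sim_M y$ and look for a finite quotient of $M$ in which the images of $x$ and $y$ are not conjugate. If $x\not\sim_G y$, use that $G$ is conjugacy separable to find a finite-index normal subgroup $L\trianglelefteq G$ with $xL\not\sim yL$ in $G/L$; since $M/(M\cap L)\cong ML/L$ embeds in $G/L$, the images of $x$ and $y$ are already non-conjugate in the finite quotient $M/(M\cap L)$. The substantial case is $x\sim_G y$, say $x^{g_0}=y$ with $g_0\in G$. Writing $\widehat C_L(x)=\{g\in G:[x,g]\in L\}$ for the full preimage of the centralizer of $xL$ in $G/L$, a direct computation shows that $xL$ and $yL$ are conjugate in $ML/L$ if and only if $g_0\in\widehat C_L(x)\,M$; taking $L=\{1\}$ this says in particular that $x\not\sim_M y$ means $g_0\notin C_G(x)M$. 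So it suffices to find a finite-index normal subgroup $L\trianglelefteq G$ with $g_0\notin\widehat C_L(x)\,M$.

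The heart of the proof, which I expect to be the main obstacle, is the following profinite statement: for $x\in M$ and any $g\in G\setminus C_G(x)M$ there is a finite-index normal subgroup $L\trianglelefteq G$ with $g\notin\widehat C_L(x)\,M$. I would establish this by combining two ingredients. First, centralizers of elements in a RAAG are separable in the profinite topology — indeed they are virtual retracts of $G$ — and their closures in the profinite completion are the full centralizers; this is part of what makes RAAGs hereditarily conjugacy separable in \cite{Minasyan}, and it controls how $\widehat C_L(x)$ approximates $C_G(x)$ as $L$ shrinks. Second, $G/K$ is virtually polycyclic, hence LERF and conjugacy separable, so that the image of $M$ and the relevant cosets are closed in $G/K$. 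Using the virtual-retract structure of $C_G(x)$ to reduce the closedness of $C_G(x)M\supseteq C_G(x)K$ to separability statements inside the virtually polycyclic quotient $G/K$, one obtains that $C_G(x)M$ is closed in $G$; refining this with the centralizer condition shows that $\widehat C_L(x)M$ shrinks down to $C_G(x)M$ along a cofinal family of finite-index normal subgroups $L$, which is exactly what is needed. Combined with the reduction above, this proves that $N$ is hereditarily conjugacy separable.

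Finally, the conjugacy problem for $N$ is decidable: given $N$ by a finite generating set expressed as words in the generators of $G$, the set of relators of $N$ is recursively enumerable because $G$ has solvable word problem, so $N$ is recursively presented; and a recursively presented conjugacy separable group has solvable conjugacy problem, by enumerating candidate conjugators and finite quotients in parallel. (This also follows from Corollary~\ref{corollary2}.)
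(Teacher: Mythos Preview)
The paper does not contain a proof of this statement: it is quoted verbatim as \cite[Corollary 11.3]{Minasyan} and used as a black box to deduce Corollary~\ref{corollary3}. There is therefore nothing in the present paper to compare your proposal against.

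That said, your outline follows the strategy of Minasyan's original argument fairly closely: reduce to conjugacy separability of an arbitrary finite-index subgroup $M\leq N$, pass to its $G$-core $K$ so that $G/K$ is again virtually polycyclic, dispose of the case $x\not\sim_G y$ using conjugacy separability of $G$, and in the case $x^{g_0}=y$ reduce to separating $g_0$ from $C_G(x)M$ in the profinite topology of $G$. The step you flag as the ``main obstacle'' is indeed the substantive one. Your sketch there is plausible but not yet a proof: you assert that centralizers in RAAGs are virtual retracts and that the profinite closure of the centralizer coincides with the centralizer in the profinite completion, and you appeal to LERF-ness of $G/K$, but you do not actually carry out the argument that $\bigcap_L \widehat C_L(x)\,M = C_G(x)M$. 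In Minasyan's paper this is handled via a specific ``centralizer condition'' ($\mathrm{CC}_K$) together with double-coset separability results tailored to virtual retracts, and it takes real work; simply saying the ingredients exist is not enough. If you intend this as a genuine proof rather than a roadmap, you would need to spell out precisely how the virtual-retract structure of $C_G(x)$ interacts with the coset $C_G(x)M$ modulo $K$ to give the required separability.
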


Corollary \ref{cor:mainRAAGs} tells us that these quotients are not only virtually polycyclic, but they are actually virtually abelian. Hence, we get:

\begin{corollary}\label{corollary3}
Any finitely generated full normal subgroup of a RAAG is hereditarily conjugacy separable, so in particular, conjugacy separable.
\end{corollary}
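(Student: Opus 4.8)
The plan is to chain together the structural result we have already established with Minasyan's conjugacy separability theorem for normal subgroups of RAAGs with nice quotients. Since the statement is a corollary of results stated earlier in the excerpt, the argument is short and the only real content is identifying the correct hypotheses.

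First I would invoke Corollary \ref{cor:mainRAAGs}: if $G$ is a RAAG and $N<G$ is a finitely generated full normal subgroup, then $G/N$ is abelian-by-finite. In particular $G/N$ has a finite-index abelian, hence finite-index polycyclic, subgroup, so $G/N$ is virtually polycyclic. This is the hypothesis needed to feed into Minasyan's result.

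Next I would apply \cite[Corollary 11.3]{Minasyan}, quoted above: for a finitely generated normal subgroup $N$ of a RAAG $G$ with $G/N$ virtually polycyclic, $N$ is hereditarily conjugacy separable (and also has decidable conjugacy problem). Since hereditary conjugacy separability of $N$ by definition means every finite-index subgroup of $N$ is conjugacy separable, taking $N$ itself shows $N$ is conjugacy separable, which gives the ``in particular'' clause.

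I do not anticipate any real obstacle here: the entire difficulty has been absorbed into establishing Corollary \ref{cor:mainRAAGs} (the virtual abelianness of $G/N$) and into Minasyan's theorem, which we are free to cite. The only point requiring a line of justification is the elementary observation that virtually abelian implies virtually polycyclic, so that the hypothesis of \cite[Corollary 11.3]{Minasyan} is genuinely met.
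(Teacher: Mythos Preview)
Your proposal is correct and matches the paper's own argument exactly: the paper simply notes that Corollary~\ref{cor:mainRAAGs} gives $G/N$ virtually abelian, hence virtually polycyclic, and then invokes \cite[Corollary~11.3]{Minasyan}.
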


We finish the note by pointing out that since finitely generated normal subgroups of RAAGs are finite index subgroups of kernels of characters, we can algorithmically determine their finiteness conditions:

\begin{corollary}\label{corollary4}
Let $G$ be a RAAG given by its standard presentation and let $N< G$ be a finitely generated full normal subgroup of $G$. Then there is an algorithm that given a finite generating set of $N$ as words in the generators of $G$ and $n\in \mathbb N$, decides whether or not $N$ is of type $F_n$ and $FP_n$.
\end{corollary}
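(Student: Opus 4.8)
The plan is to reduce the question to the Bieri--Neumann--Strebel--Renz (BNSR) invariants of $G$, which Meier, Meinert and VanWyk computed explicitly for right-angled Artin groups in \cite{MeierMeinertVanWyk}.

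\emph{Step 1: produce, effectively, a co-abelian subgroup commensurable with $N$.} By Corollary \ref{cor:mainRAAGs}, $G\slash N$ is finite-by-abelian, and I want the associated character explicitly. Let $m = |V(\Gamma)|$, so that $G^{\mathrm{ab}} \cong \Z^m$, and let $\overline{N} \le \Z^m$ be the subgroup generated by the images of the given generating words of $N$ --- its generators are read off as exponent-sum vectors directly from the input. Let $M = \{x \in \Z^m : nx \in \overline{N} \text{ for some } n \ge 1\}$ be the isolator of $\overline N$, which is computable from $\overline N$ via Smith normal form, and let $\chi \colon G \to \Z^m \to \Z^m\slash M$, the latter group being free abelian of rank $k = m - \mathrm{rk}\,\overline N$. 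Then $\overline N \subseteq M$ forces $N \le \ker\chi$, and since $G\slash N$ is finite-by-abelian the quotient $\ker\chi\slash N = \ker\bigl((G\slash N) \to (G\slash N)^{\mathrm{ab}}\slash \mathrm{torsion}\bigr)$ is finite (when $k=0$ this just says $N$ has finite index in $G$), so $N$ has finite index in $\ker\chi$. As being of type $F_n$ and of type $FP_n$ is inherited both by finite-index subgroups and by finite-index overgroups, $N$ is of type $F_n$ (resp.\ $FP_n$) if and only if $\ker\chi$ is.

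\emph{Step 2: translate into BNSR invariants and decide.} The group $G$ is of type $F_\infty$ and $G\slash\ker\chi \cong \Z^k$ is abelian, so by the Bieri--Renz criterion (the translation used throughout \cite{MeierMeinertVanWyk}) $\ker\chi$ is of type $FP_n$ if and only if $S(G,\ker\chi) \subseteq \Sigma^n(G;\Z)$, and of type $F_n$ if and only if $S(G,\ker\chi) \subseteq \Sigma^n(G)$, where $S(G,\ker\chi)$ is the subsphere of the character sphere consisting of the classes that vanish on $\ker\chi$. This subsphere is exactly the image of $S(\Z^k)$ under the linear embedding dual to $\chi$, hence a rational great subsphere cut out by finitely many linear equations over $\Q$, and it is computed from $\chi$. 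By \cite{MeierMeinertVanWyk} the sets $\Sigma^n(G)$ and $\Sigma^n(G;\Z)$ are given explicitly in terms of the flag complex $L$ of $G$: their complements are finite unions of subsets of the sphere each defined by linear equalities and sign inequalities on the coordinates of a character (the vanishing and sign pattern on the vertices of a ``dead'' simplex of $L$), and the chain $\Sigma^1(G) \supseteq \Sigma^2(G) \supseteq \cdots$ stabilises after finitely many steps, with a bound depending only on $L$. Hence, for a given $n$, testing whether the rational subsphere $S(G,\ker\chi)$ meets the complement of $\Sigma^n(G)$ (resp.\ of $\Sigma^n(G;\Z)$) amounts to finitely many linear feasibility problems over $\Q$, which are decidable; this decides whether $N$ is of type $F_n$ and of type $FP_n$.

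\emph{The main obstacle} is the non-effectivity hidden in ``$\Sigma^n(G)$ is given explicitly by \cite{MeierMeinertVanWyk}'': the combinatorial description involves the homology --- and, for the homotopical invariants $\Sigma^n(G)$ governing type $F_n$, also the fundamental group --- of links of simplices of $L$, and whether a flag complex is simply connected is not decidable uniformly (for instance, the Bestvina--Brady kernel of $A_L$ is finitely presented exactly when $L$ is simply connected). This is precisely why the statement quantifies the algorithm after fixing $G$: for each RAAG $G$ the finitely many invariants $\Sigma^n(G)$, $\Sigma^n(G;\Z)$ form a fixed finite packet of data, after which the procedure above is effective in $(N,n)$. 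A secondary point to check carefully in Step 1 is that $\ker\chi$ is genuinely the co-abelian subgroup commensurable with $N$ furnished by Corollary \ref{cor:mainRAAGs}; this follows from $G\slash N$ being finite-by-abelian together with $[G,G] \le \ker(G\to G^{\mathrm{ab}})$, which identifies $(G\slash N)^{\mathrm{ab}}\slash\mathrm{torsion}$ with $\Z^m\slash M$.
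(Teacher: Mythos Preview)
Your proof is correct and follows essentially the same route as the paper: pass from $N$ to a commensurable co-abelian kernel using Corollary~\ref{cor:mainRAAGs}, then invoke the Meier--Meinert--VanWyk description of the BNSR invariants of a RAAG. Your execution is in fact more careful than the paper's sketch---you construct $\chi$ explicitly via the isolator in $\Z^m$ rather than appealing to Tietze transformations, you correctly phrase the finiteness criterion as an inclusion of the whole subsphere $S(G,\ker\chi)$ in $\Sigma^n(G)$ (the paper speaks of a single living subcomplex $\mathcal L_\chi$, which is only literally correct for rank-one quotients), and you flag the non-uniformity in deciding simple connectedness of links, explaining why fixing $G$ sidesteps it.
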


\begin{proof}
We have a presentation of $G \slash N$ by adding the generators of $N$ as relations to the standard presentation of $G$. By Corollary \ref{cor:mainRAAGs}, $G \slash N$ is finite-by-abelian, so by using Tietze transformations we may assume that the presentation that we have is actually a finite-by-abelian presentation.

Then, there is a homomorphism $\chi$ from $G$ to the abelian group and $N$ has finite index in the kernel, so it suffices to algorithmically decide the finiteness properties of the kernel of  $\chi$. For that, by \cite{MeierMeinertVanWyk}, in order to determine if $\ker \chi$ is of type $F_n$ (respectively of type $FP_n$), it is enough to construct the flag subcomplex $\mathcal{L}_{\chi}$ defined by the living vertices and decide whether or not $\mathcal{L}_{\chi}$ is $(n-1)$-connected (respectively $(n-1)$-acyclic) and $(n-1)$-$\mathbb Z$-acyclic dominating (see definitions in \cite{MeierMeinertVanWyk}).
\end{proof}

\end{document}